\definecolor{greennew}{rgb}{0.1
	,0.4,0.1}
\newcommand{\W}{\mathcal{W}}
\newcommand{\E}{\mathbb{E}}
\newcommand{\N}{\mathbb{N}}
\renewcommand{\P}{\mathbb{P}}
\newcommand{\A}{\mathcal{A}}
\newcommand{\R}{\mathbb{R}}
\renewcommand{\l}{\langle}
\renewcommand{\r}{\rangle}
\title{\bf{Random Walk in Slowly Changing Environments}}
\author{Bryan Park}\thanks{Stanford University.  
	bryan314@stanford.edu}
\author{Souvik Ray}\thanks{School of Data Science and Society, University of North Carolina at Chapel Hill. souvikr@unc.edu}
\numberwithin{equation}{section}
\date{}
\newtheorem{theorem}{Theorem}[section]
\newtheorem{definition}[theorem]{Definition}
\newtheorem{result}[theorem]{Result}
\newtheorem{corollary}[theorem]{Corollary}
\newtheorem{remark}[theorem]{Remark}
\newtheorem{lemma}[theorem]{Lemma}
\crefname{equation}{}{}
\crefname{theorem}{Theorem}{Theorems}
\crefname{assumption}{Assumption}{Assumptions}
\crefname{remark}{Remark}{Remarks}
\Crefname{lemma}{Lemma}{Lemmas}
\crefname{lemma}{Lemma}{Lemmas}
\crefname{enumi}{}{}
\begin{document}
	\begin{abstract}
		A \textit{Random Walk in Changing Environment} (RWCE) is a weighted random walk on a locally finite, connected graph $G$ with random, time-dependent edge-weights. This includes self-interacting random walks, where the edge-weights depend on the history of the process. In general, even the basic question of recurrence or transience for RWCEs is difficult, especially when the underlying graph contains cycles. In this note, we derive a condition for recurrence or transience that is too restrictive for classical RWCEs but instead works for any graph $G.$ Namely, we show that any bounded RWCE on $G$ with ``slowly" changing edge-weights inherits the recurrence or transience of the initial weighted graph.
	\end{abstract}
	
		\maketitle
		
	\section{Introduction}
	The main focus of inspection in this paper is \textit{Random Walk in Changing Environment} or RWCE as introduced by \cite{Amir2020}. Broadly speaking, an RWCE is a random walk on a graph where each step is performed in a different \textit{environment}, i.e., the weights corresponding to the edges of the graph change over time. The rigorous definition of this setup can be formulated as follows. We shall work on a simple, undirected, locally finite\footnote{A graph is called locally finite if all of its vertices have finite degree.} and connected graph $G=(V,E)$ with vertex set $V$ and edge set $E$. In particular, these conditions guarantee that both $V$ and $E$ are countable. For any $x,y \in V$, the notation $x \sim y$ means that $x$ and $y$ are neighbors in $G$.
	
	\begin{definition}[RWCE]{\label{def1}}
		Fix an underlying probability space $(\Omega, \mathscr{F}, \mathbb{P})$. A \textit{Random Walk in Changing Environment }(RWCE) on the graph $G=(V,E)$ is a stochastic process $\{\l X_t, G_t\r\}_{t=0}^{\infty}$, where $G_t=(V,E,C_t)$ are graphs over the fixed vertex set $V$ and edge set $E$ with corresponding weight configurations given by measurable $C_t : \Omega \to \mathcal{W}:=[0,\infty)^E$, satisfying the following dynamics: For all $t\in \mathbb{N}_0 := \left\{0,1,2,\ldots \right\}$ and $y \in V$, we have
		\begin{equation}{\label{dynamics1}}
			\P[X_{t+1}=y\mid \mathscr{F}_t]=\frac{C_t(X_t,y)}{\sum_{z\,:\,z\sim X_t}C_t(X_t,z)}\mathbbm{1}_{\left(y \sim X_t \right)},
		\end{equation}
		where $X_t : \Omega \to V$ is measurable and $\mathscr{F}_t :=\sigma \left( \l X_s, G_s \r : s=0,1,\ldots,t \right) $.
	\end{definition}
	
Henceforth, $C_t(x,y)=C_t(y,x)$ will denote the weight at time $t$ associated with the edge connecting the vertices $x$ and $y$. For notational simplicity, we shall write $P(x,y;C)$ to denote the transition probability from $x$ to $y$ when the underlying weight configuration is $C$, i.e.,
$$ P(x,y;C) := \dfrac{C(x,y)}{\sum_{z: z \sim x} C(x,z)} \mathbbm{1}_{(x \sim y)}.$$
	
The sequence $\left\{X_t\right\}_{t=0}^{\infty}$ is called the \textit{Random Walk}, while the sequence $\left\{G_t\right\}_{t=0}^{\infty}$ is called the \textit{Environment}.	In words, at time $t\in\N_0$, the RWCE traverses a neighboring edge from $X_t$ with probability proportional to its weight at that time, given by the realization of $C_t$, which may depend on the history of the process so far and on extra randomness. In the rest of the paper, $\mathscr{G}_t:= \sigma(G_s : 0 \leq s \leq t)$ will denote the $\sigma$-algebra containing all the information of the environment until time $t$, whereas $\mathscr{G}_{\infty} := \sigma(G_s : s \geq 0)$.

While the term RWCE was coined by \cite{Amir2020}, many special cases have been extensively studied before. For instance, RWCEs include the large class of self-interacting random walks, where the weights depend on the history of the process. A well-known class of examples regard the \textit{reinforced random walk} where traversing an edge changes the weight of that edge. Reinforced random walks were first introduced by \cite{Diaconis1986} and later generalized by \cite{Davis1990} and \cite{Pemantle2007}. The broad class of reinforced random walks include \textit{once-reinforced random walks} (ORRW) where the weight of a particular edge is altered the first time it is traversed; \textit{linearly edge-reinforced random walks} (LRRW) where the weight of any edge is increased every time it is traversed. We refer to the survey articles by \cite{Gady2012} and \cite{Pemantle2007} for more details on these models and their properties. Other examples include the \textit{Bridge burning random walk} \cite{Amir2020}, \textit{Laplacian random walk}  \cite{Lawler1979} and \textit{self-avoiding walk with bond repulsion}  \cite{Toth1995}.

	
We are interested in the question of recurrence or transience for RWCEs which is, in general, difficult to answer  as the process need not be Markovian. In fact, in contrast to random walks in a fixed environment, different notions of recurrence and transience may not even coincide for RWCEs in general. For example, it is possible to construct an RWCE which almost surely visits some vertex infinitely often, while visiting some other vertex finitely often, see~\cite[Example 1]{Park2024} for one such example. In this note, we follow and adopt the strongest definitions of recurrence and transience from \cite{Amir2020}.
	\begin{definition}[Recurrence/Transience/Mixed-Type]
		An RWCE is \textit{recurrent} if almost surely every vertex is visited infinitely often. It is \textit{transient} if almost surely every vertex is visited finitely often. Otherwise, it is of \textit{mixed-type}.
	\end{definition}
There have been quite a few works studying recurrence and transience for reinforced random walks. It has been conjectured that the ORRW is recurrent on $\mathbb{Z}^d$ for $d=1,2$ and undergoes a phase transition for $d \geq 3$, being recurrent when the reinforcement parameter  is large and transient when it is small. These kind of questions on $\mathbb{Z}^d$ for $d \geq 2$ remain completely open. The first example of such a phase transition was proved in \cite{Kious2018} for a particular class of trees with polynomial growth, which is in contrast with the result of \cite{Durrett2002} that the ORRW is transient on regular trees. Later, \cite{Collevecchio2018} provided a complete picture of recurrence/transience and phase transition for ORRWs on trees, characterizing the critical parameter by a quantity called the \textit{branching-ruin number}. For results on ORRWs when the underlying graph is $\mathbb{Z}$ or a ladder of the form $\mathbb{Z} \times \Gamma$, where $\Gamma$ is some finite graph, we refer to the works of \cite{Kious2018s} and \cite{Pfaffelhuber2021}. In contrast to these, recurrence and transience for LRRWs were established in a series of works by \cite{Angel2014}, \cite{Disertori2015} and \cite{Sabot2015}.

Once we shift our attention from special type of RWCEs like reinforced random walks, we find very few results on recurrence/transience for general RWCEs.  Without any further assumptions on the RWCE, there are myriad possibilities for recurrence and transience, as demonstrated by a series of examples in \cite{Amir2020}. To arrive at some concrete results, \cite{Amir2020} considered a few restrictions on the RWCE as mentioned below.

	\begin{definition}{\label{proper}}
		An RWCE is said to be proper if $C_t(e) \in (0,\infty)$ for all $e \in E$ and $t \geq 0$. It is called improper otherwise.
	\end{definition}
	
	\begin{definition}{\label{bounded}}
		An RWCE on the graph $G=(V,E)$ is said to be bounded from above (resp.~below) 
		if $\sup_{t \geq 0}C_t(e) < \infty$ (resp.~$\inf_{t \geq 0} C_t(e) >0$) for all $e \in E$ and for all $t \geq 0$, almost surely.
	\end{definition}
	
	\begin{remark}
		Definition~\ref{bounded} is a weaker form of boundedness, compared to requiring $C_t(e)$ to be bounded from above (resp.~below) by some finite (resp.~non-zero) constant uniformly in time, almost surely.
	\end{remark}

\begin{definition}{\label{monotone}}
	An RWCE is called monotone increasing (resp.~decreasing) if for all $t \geq 0$, we have $C_{t+1} \geq C_t$ (resp.~$C_{t+1} \leq C_t$) almost surely.
	\end{definition}
	
\begin{definition}{\label{elliptic}}
	An RWCE $\left\{\l X_t,G_t \r \right\}_{t \geq 0}$ on the graph $(V,E)$ is said to be elliptic (uniformly in time) if for any $\left\{x,y\right\} \in E$, there exists a $\mathscr{G}_{\infty}$-measurable random variable $P_{x,y}$ with $\P[P_{x,y}>0]=1$ such that for all $t \geq 0$, we have $\mathbb{P}\left(X_{t+1}=y \mid \mathscr{F}_t\right) \geq P_{x,y}$ on the event $(X_t=x)$ almost surely. In other words, $ P(x,y;C_t) \geq P_{x,y}\mathbbm{1}_{(X_t=x)}$, for all  $t \geq 0$, almost surely.
\end{definition}
			Throughout this paper, we shall restrict our attention to proper RWCEs. It is easy to see that a bounded (from above and below) RWCE is also elliptic. The usefulness of the concept of ellipticity lies in the fact that an elliptic RWCE which visits some vertex infinitely (resp.~finitely) often almost surely is recurrent (resp.~transient). The argument goes as follows. Since $G$ is connected, it is enough to show that if the RWCE visits some vertex infinitely (resp.~finitely) often almost surely, then it also visits its neighbors infinitely (resp.~finitely) often almost surely. Take $x \sim y \in V$. The ellipticity condition guarantees that for any $T \geq 0$, $\P[X_t=y, \text{ for some } t \geq T \mid \mathscr{F}_T] \geq P_{x,y}$, on the event $(X_T=x)$ almost surely. Taking $T \to \infty$ and applying \textit{Levy's Upward Theorem}, we see that $\mathbbm{1}(X_t=y \text{ infinitely often}) \geq P_{x,y} \mathbbm{1}(X_t=x \text{ infinitely often})$, almost surely. Thus, on the event that $x$ is visited by the walk infinitely often, $y$ is also visited infinitely often almost surely. Hence, if $x$ is visited infinitely often almost surely, the same is true for $y$ and if $y$ is visited finitely often almost surely, the same is true for $x$, which proves our claim.  In the rest of this paper, we shall focus on proper, elliptic RWCEs which are bounded from above. We note that many of the interesting examples in the literature fall in this category.
			
			Another crucial categorization of RWCEs is obtained by considering whether the dynamics underlying the evolution of the environment depends on the history of the walk or not. The following definition makes the concept rigorous. 
			
			\begin{definition}
			An RWCE $\left\{\l X_t, G_t \r \right\}_{t \geq 0}$ is said to be non-adaptive if the distribution of $C_{t+1}$ given $(X_0,C_0,\ldots,X_t,C_t,X_{t+1})$ does not depend of $(X_0,\ldots,X_{t+1})$. More precisely,  the walk is called non-adaptive if for any measurable $\mathcal{A} \subseteq \mathcal{W} = [0,\infty)^E$, we have 
			$$ \mathbb{P} \left[ C_{t+1} \in \mathcal{A} \mid \sigma \left( \mathscr{F}_t, X_{t+1} \right) \right] = \mathbb{P} \left[ C_{t+1} \in \A \mid \mathscr{G}_t \right], \; \text{almost surely}, \; \text{for all } t \geq 0.$$
			The walk is called adaptive otherwise.
		\end{definition}
		
		Since the dynamics of the environment does not depend on the path of the walk for a non-adaptive RWCE, one can construct such a walk in a hierarchical fashion such that conditional on the environment (at all time points), the walk is just an RWCE with deterministic weight configurations. Lemma~\ref{nonadaptive} makes this observation concrete. The proof is deferred to Section~\ref{appn}. 
		
		\begin{lemma}{\label{nonadaptive}}
			Suppose $\left\{\l X_t, G_t \r\right\}_{t \geq 0}$ is a non-adaptive RWCE on the graph $G=(V,E)$. Consider a new process $\left\{X^{\prime}_t : t \geq 0\right\}$ on $V$, defined on the same probability space, such that $X_0^{\prime}=X_0$ and for any $t \geq 0$, $y \in V$, we have
			$$ \mathbb{P} \left( X^{\prime}_{t+1}=y \mid \sigma\left(\mathscr{G}_{\infty}, \mathscr{F}_t^{\prime} \right) \right) = \dfrac{C_{t}(X^{\prime}_t,y)}{\sum_{z \sim X_t^{\prime} }C_t(X^{\prime}_t,y)} \mathbbm{1} \left(y \sim X^{\prime}_t \right) = P(X_t^{\prime},y; C_t),$$
				where we define $\mathscr{F}^{\prime}_t := \sigma \left( \l X^{\prime}_s,G_s \r : 0 \leq s \leq t \right).$ Then, the new RWCE $\left\{\l X^{\prime}_t, G_t \r\right\}_{t \geq 0}$ has the same law as the original RWCE $\left\{\l X_t, G_t \r\right\}_{t \geq 0}$.
		\end{lemma}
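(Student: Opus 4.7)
The plan is to exploit the common environment: both processes share the identical random sequence $\{G_t\}_{t \geq 0}$ on the same probability space, so it suffices to show that the original walk $\{X_t\}$, conditional on $\mathscr{G}_\infty$, is a time-inhomogeneous Markov chain starting at $X_0$ with transitions $P(\cdot, \cdot; C_t)$ at step $t$. The new walk $\{X'_t\}$ admits exactly this conditional description by construction, and such a conditional Markov chain law is uniquely determined by its starting point and transitions, so the two processes will then share the same joint law with $\mathscr{G}_\infty$.

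The central claim to establish is therefore
$$\mathbb{P}\bigl[X_{t+1}=y \,\bigm|\, \mathscr{G}_\infty \vee \sigma(X_0,\ldots,X_t)\bigr] = P(X_t,y;C_t).$$
The right-hand side is $\mathscr{F}_t$-measurable and already equals $\mathbb{P}[X_{t+1}=y\mid\mathscr{F}_t]$ by \eqref{dynamics1}, so this reduces to showing that $X_{t+1}$ is conditionally independent of the future environment $\sigma(G_s: s>t)$ given $\mathscr{F}_t$. This is where the non-adaptivity hypothesis will do all the work.

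To prove the conditional independence I would induct on $k \geq 1$, establishing that for any measurable $B_1, \ldots, B_k \subseteq \mathcal{W}$,
$$\mathbb{P}[G_{t+1}\in B_1,\ldots,G_{t+k}\in B_k \mid \mathscr{F}_t, X_{t+1}] = \mathbb{P}[G_{t+1}\in B_1,\ldots,G_{t+k}\in B_k \mid \mathscr{F}_t].$$
The base case $k=1$ is exactly the non-adaptivity assumption. For the inductive step, I would factor the $(k+1)$-st environment indicator out, condition on $\mathscr{F}_{t+k}$, and apply non-adaptivity at time $t+k$ to replace the one-step conditional by a $\mathscr{G}_{t+k}$-measurable quantity; the inductive hypothesis combined with a monotone-class argument then removes the extra conditioning on $X_{t+1}$, and the same monotone-class machinery extends the conclusion from cylinder events to all of $\sigma(G_s: s > t)$.

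The main obstacle is the bookkeeping in this induction: at each stage one must verify that the conditional law of the next environment update, given both the past $\mathscr{F}_{t+k}$ and the next walk step, reduces to something $\mathscr{G}_{t+k}$-measurable, so that intermediate walk variables can be successively integrated out without leaving any residual dependence on the trajectory. Once the conditional independence is in place, the target identity follows, the original walk is seen to be a Markov chain with the prescribed transitions conditional on $\mathscr{G}_\infty$, and Lemma~\ref{nonadaptive} follows from the uniqueness of this conditional law.
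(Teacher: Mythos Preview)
Your approach is correct and conceptually clean, but it is organized differently from the paper's proof. The paper argues by induction on the time horizon $T$: it fixes a cylinder event $\{X_s=x_s,\,C_s\in\mathcal{A}_s,\,0\le s\le T\}$ and shows directly, via the tower property and one application of non-adaptivity at time $T-1$, that its probability coincides with the corresponding probability for $\{X'_s\}$, both reducing to the common expression $\mathbb{E}\bigl[\prod_{s<T} P(x_s,x_{s+1};C_s)\,\mathbbm{1}_{(X_0=x_0)}\prod_{s\le T}\mathbbm{1}_{(C_s\in\mathcal{A}_s)}\bigr]$. Your route instead isolates a structural fact first---that $X_{t+1}$ is conditionally independent of the entire future environment $\sigma(G_s:s>t)$ given $\mathscr{F}_t$---proved by induction on the look-ahead depth $k$, and then invokes uniqueness of the $\mathscr{G}_\infty$-conditional Markov law. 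What your approach buys is a reusable statement (the conditional Markov property of the original walk given $\mathscr{G}_\infty$) at the cost of a monotone-class step to pass from cylinder events to general $\mathscr{G}_{t+k}$-measurable functions in the inductive step; the paper's approach trades that structural insight for a computation that stays entirely within cylinder events and avoids any monotone-class machinery.
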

		
	In their work, \cite{Amir2020} showed that neither boundedness nor monotonicity are enough to draw any significant conclusions about the recurrence/transience of RWCEs, at least in the adaptive setting, even for a simple situation where the underlying graph is $\mathbb{Z}$. Examples were also provided to demonstrate that even in the non-adaptive setting, boundedness (even by deterministic weight configurations) does not imply recurrence/transience. Nevertheless, they were able to establish that any monotone increasing adaptive RWCE on the graph $\mathbb{Z}$, bounded above by some recurrent\footnote{A weighted graph $(G,w)$ is recurrent (resp.~transient) if the weighted random walk on $(G,w)$ is recurrent (resp.~transient).}  connected\footnote{A weighted graph $(G,w)$ is connected if the weighted random walk can reach any destination vertex from any starting vertex in finite time with positive probability.} graph, is recurrent. This particular result appears in \cite[Theorem 4.1]{Amir2020} and was also extended to cover the cases where the underlying graph is a tree, see \cite[Theorem 5.1]{Amir2020}. Similar results were shown regarding transience, again when the underlying graph is a tree. We refer to \cite[Theorem 4.1-4.4, 5.1, 5.2]{Amir2020} for precise statements of these results. In contrast, these results can not be extended to more complex graphs within the adaptive setting, as demonstrated by an example in \cite[Example 6.1]{Amir2020} for $\mathbb{Z}^2$. However, the authors conjectured that some of these aforementioned results, namely \cite[Theorem 4.1-4.4]{Amir2020}, hold true on any graph in the non-adaptive setting. These results remain as conjectures as per our current knowledge, although it is worth mentioning the recent works of \cite{Dembo2014f, Dembo2014s}, where the authors have analyzed random walks on ``monotone domains" with a focus on $\mathbb{Z}^d$ and proved criteria for recurrence/transience for such walks in both the non-adaptive \cite{Dembo2014f} and adaptive \cite{Dembo2014s} setting.
	
    In this note, we derive a condition for recurrence or transience that is too restrictive for classical RWCEs (such as the ORRW) but instead holds for any graph. Loosely speaking, we show that any proper and bounded from above RWCE on $G$ with ``slowly" changing edge-weights inherits the recurrence or transience of the initial weighted graph. In other words, we put quite a stringent assumption on the evolution of the environment of the RWCE in order to get a recurrence/transience result which holds for any graph. 
	
	The rest of the paper is organized as follows. In Section~\ref{mainresult}, we introduce our main result along with a few of its corollaries when applied to special cases. In Section~\ref{electrical}, we discuss some well-known results from electrical network theory which will be crucial in our proof. Section~\ref{single} is devoted to the proof of our main result.  Finally, Section~\ref{appn} contains some deferred proofs and useful lemmas. 
	
	\section{Main Result}{\label{mainresult}}
	Our condition on ``slow" changes is conveniently stated in terms of \textit{resistances}, which are simply the reciprocal of weights. For any $t\geq 0,$ we write $R_t:=1/C_t$. The term ``resistance" alludes to the well-studied connection between random walk on graphs and electrical networks. Indeed, our proof for the main result exploits that connection heavily, as was the case for techniques used by~\cite{Amir2020}. In the next section, we shall present a short summary of the tools borrowed from electrical network theory required for our analysis. For now, we state our main result.
	
	\begin{theorem}{\label{mainthm}}
		Let $G=(V,E)$ be any simple, undirected, locally finite and connected graph. 
		Let $\{\l X_t,G_t\r\}_{t\geq 0}$ be any proper and bounded from above RWCE on $G$. Assume that
		\begin{equation}{\label{condition}}
		\Gamma :=	\sum_{t \geq 0, \,e \in E}|R_t(e)-R_{t+1}(e)| < \infty
		\end{equation}
		almost surely. Then the RWCE $\{\l X_t,G_t\r\}_{t\geq 0}$ is recurrent (resp.~transient), provided the weighted random walk on $(G,C_0)$ is recurrent (resp.~transient).\footnote{For a random weight configuration $C_0$, we say that the weighted random walk on $(G,C_0)$ is recurrent/transient if the weighted random walk, conditional on $C_0$, is recurrent/transient almost surely.}
	\end{theorem}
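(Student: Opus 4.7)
The plan is to combine the asymptotic stabilization of the environment with an electrical-network martingale argument. The assumption $\Gamma<\infty$ almost surely makes $(R_t(e))_{t\ge 0}$ Cauchy for each edge $e$, so $R_t(e)\to R_\infty(e)$ almost surely; upper boundedness of the walk gives $\inf_t R_t(e)>0$, while $R_t(e)\le R_0(e)+\Gamma<\infty$ gives $R_\infty(e)<\infty$, so $R_\infty(e)\in(0,\infty)$ and $C_\infty:=1/R_\infty$ is a proper weight configuration on $G$. The same two-sided bound shows the RWCE is bounded from above \emph{and} below, hence elliptic by the discussion after \Cref{elliptic}, so it suffices to exhibit one vertex visited infinitely often (respectively, finitely often) almost surely.

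I would next verify that $(G,C_0)$, $(G,C_\infty)$, and in fact every $(G,C_t)$ share the same recurrence type via the flow--energy duality. When $(G,C_0)$ is transient, select a nonnegative unit flow $\theta$ from $o$ to $\infty$ with $\theta(e)\in[0,1]$ and finite $R_0$-energy; since $|R_0(e)-R_\infty(e)|\le \sum_{s\ge 0}|R_s(e)-R_{s+1}(e)|$, the $R_\infty$-energy satisfies
\[
\sum_e \theta(e)^2 R_\infty(e)\le\sum_e\theta(e)^2 R_0(e)+\sum_e|R_0(e)-R_\infty(e)|\le\sum_e\theta(e)^2 R_0(e)+\Gamma<\infty,
\]
so $(G,C_\infty)$ is transient; the reverse direction is symmetric, and the recurrence equivalence follows by contraposition. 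The same comparison gives the uniform effective-resistance bound $R_\mathrm{eff}(o,\partial B_n;C_t)\le R_\mathrm{eff}(o,\partial B_n;C_0)+\Gamma$ over every $t$ and every finite $B_n\ni o$.

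The last step transports the type from these fixed environments to the RWCE itself. For a finite exhaustion $B_n\uparrow V$ with $o\in B_n$, let $V_n^{(t)}(x):=\P_x^{C_t}(\tau_{\partial B_n}<\tau_o)$, which is $C_t$-harmonic in $B_n\setminus(\{o\}\cup\partial B_n)$. Harmonicity of $V_n^{(t)}$ at $X_t$ with respect to the one-step transition $P(\cdot,\cdot;C_t)$ kills the first-order drift, leaving
\[
\E\bigl[V_n^{(t+1)}(X_{t+1})-V_n^{(t)}(X_t)\,\big|\,\mathscr F_t\bigr]=\E\bigl[(V_n^{(t+1)}-V_n^{(t)})(X_{t+1})\,\big|\,\mathscr F_t\bigr],
\]
whose absolute value is at most $\|V_n^{(t+1)}-V_n^{(t)}\|_\infty$. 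The classical sensitivity formula expressing $\partial V_n^{(t)}(x)/\partial R(e)$ via a pair of unit-current flows, combined with the uniform lower bound on $C_\mathrm{eff}(o,\partial B_n;C_t)$ from the preceding paragraph, gives $\sum_t\|V_n^{(t+1)}-V_n^{(t)}\|_\infty\le K\Gamma$ with $K$ finite and $n$-independent in the transient regime. Optional stopping at $\tau_{\partial B_n}\wedge\tau_o$ then yields $\P_x^{\mathrm{RWCE}}(\tau_{\partial B_n}<\tau_o)\ge V_n^{(0)}(x)-K\Gamma$, and letting $n\to\infty$ delivers a strictly positive escape probability, hence transience of the RWCE by ellipticity.

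The principal obstacle is the recurrent case: there $C_\mathrm{eff}(o,\partial B_n;C_t)\downarrow 0$ as $n\to\infty$ and the naive sensitivity constant blows up. To circumvent this I would restart the martingale at a random time $T$ at which the residual total variation $\Gamma_T:=\sum_{s\ge T,\,e}|R_s(e)-R_{s+1}(e)|$ is arbitrarily small (possible since $\Gamma_T\downarrow 0$ almost surely) and coordinate $n\to\infty$ with $T\to\infty$ in a diagonal fashion, so that the vanishing tail dominates the diverging sensitivity and forces $\P_x^{\mathrm{RWCE}}(\tau_o<\infty)=1$, which yields recurrence via ellipticity.
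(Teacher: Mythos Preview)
Your outline contains the right ingredients---ellipticity, the sensitivity formula for harmonic functions under resistance perturbations, and optional stopping---but the additive martingale $V_n^{(t)}(X_t)$ does not close the argument, and your diagnosis of where the difficulty lies is inverted.

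In the transient case your bound $\P_x^{\mathrm{RWCE}}(\tau_{\partial B_n}<\tau_o)\ge V_n^{(0)}(x)-K\Gamma$ is useless unless $K\Gamma<V_n^{(0)}(x)$, and nothing in the hypotheses forces this: $\Gamma$ is only assumed finite, not small. Moreover, the sensitivity identity (what the paper proves as \Cref{diffes}) gives $|V_n^{(t+1)}(x)-V_n^{(t)}(x)|\le \mathcal{R}_{n,t}^{-1}\sum_e|R_t(e)-R_{t+1}(e)|$, so the constant you must control is $C_{\mathrm{eff}}=1/\mathcal{R}_{n,t}$ from \emph{above}; your ``uniform lower bound on $C_{\mathrm{eff}}$'' is the wrong direction. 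Conversely, in the recurrent regime $C_{\mathrm{eff}}\to 0$ actually \emph{helps}: the sensitivity vanishes, so the additive drift is small and no diagonal restart is needed. Your assignment of difficulty is therefore backwards, and the vague diagonalization you propose would not rescue the transient case anyway, since after a restart at time $T$ the starting value $V_n^{(T)}(X_T)$ may itself be arbitrarily small.

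The paper's remedy is to pass from differences to \emph{ratios}: it tracks $(1-v_{n,t+1}(x))/(1-v_{n,t}(x))$ and builds multiplicative super/sub-martingales $A^{*(n)}_t,\;B^{*(n)}_t$ (\Cref{supersub}). The crucial observation (\Cref{voltlower} and \Cref{maincor}) is that both $|v_{n,t}(x)-v_{n,t+1}(x)|$ and $1-v_{n,t}(x)$ are of order $1/\mathcal{R}_{n,t}$, so in the ratio the $n$-dependence \emph{cancels}, yielding $|\,\tfrac{1-v_{n,t+1}(x)}{1-v_{n,t}(x)}-1\,|\le \Gamma^*_t\sum_e|R_t(e)-R_{t+1}(e)|$ with $\Gamma^*_t$ depending only on local edge weights near the origin. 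This makes $\prod_t\alpha^*_{n,t}$ finite and $\prod_t\beta^*_{n,t}$ positive uniformly in $n$, which is exactly what optional stopping needs; the remaining randomness of $\Gamma$ is then handled by a stopping-time localization in the transient case. Your additive scheme misses this cancellation and cannot be repaired by a restart alone.
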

	
	\begin{remark}{\label{boundbelow}}
			If the condition in~(\ref{condition}) is satisfied, note that almost surely $\sum_{e\in E}|R_t(e)-R_{t+1}(e)|$ is finite for each $t \geq 0$ and tends to $0$ as $t\to\infty.$ This is the reason we say that the weights are changing ``slowly." Since, $\sum_{e \in E} |R_t(e)-R_{t+1}(e)| < \infty$ almost surely for all $t \geq 0$, we know by Lemma~\ref{resistchange} that the weighted random walk on $(G,C_t)$ is recurrent (resp.~transient) for all $t \geq 0$, provided the walk on $(G,C_0)$ is recurrent (resp.~transient). 
	\end{remark}

	\begin{remark}{\label{below}}
		Under the condition in~(\ref{condition}), for any $e \in E$, we have $|R_t(e)-R_0(e)| \leq \Gamma$ and thus $C_t(e) \geq 1/(\Gamma + 1/C_0(e))$ for all $t \geq 0$. Hence, any proper RWCE satisfying~(\ref{condition}) must necessarily be bounded from below. As a result, any RWCE satisfying the hypotheses of Theorem~\ref{mainresult} is bounded (from above and below) and thus elliptic. 
		Condition~(\ref{condition}) also guarantees that $R_{\infty}(e) = \lim_{t \to \infty} R_t(e)$ exists almost surely for any $e \in E$. Moreover, $C_{\infty}(e) := 1/R_{\infty}(e)$ satisfies $C_{\infty}(e) \in (0, \infty)$ since the RWCE is bounded.
	\end{remark}

	 In the special case of monotone increasing (resp.~decreasing) RWCEs, it suffices for $\sum_{e}| R_{0}(e)-R_{\infty}(e)|$ to be finite, as mentioned in the following corollary.

	\begin{corollary}{\label{cor1}}
Let $G=(V, E)$ be simple, undirected, locally finite and connected.
Let $\left\{\left\langle X_{t}, G_{t}\right\rangle\right\}_{t \geq 0}$ be any proper, bounded from above RWCE on $G$ that is also edgewise monotone. Assume that the random variable $\sum_{e \in E}\left|R_{0}(e)-R_{\infty}(e)\right|$ is finite almost surely, where  $R_{\infty}(e):=\lim _{t \rightarrow \infty} R_{t}(e)$ for all $e \in E$. Then, $\left\{\l X_{t},G_t \r \right\}_{t \in \mathbb{N}}$ is recurrent (resp.~transient), provided the weighted random walk on $(G,C_0)$ is recurrent (resp.~transient).
	\end{corollary}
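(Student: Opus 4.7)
The plan is to show that the corollary follows directly from Theorem \ref{mainthm} by verifying that its single-summability hypothesis $\sum_e |R_0(e) - R_\infty(e)| < \infty$ upgrades, under edgewise monotonicity, to the double-summability condition (\ref{condition}).

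First I would fix an edge $e \in E$ and exploit the monotonicity assumption: the sequence $\{C_t(e)\}_{t \geq 0}$ is monotone in $t$, hence so is $\{R_t(e)\}_{t \geq 0} = \{1/C_t(e)\}_{t \geq 0}$. As a consequence, all the increments $R_{t+1}(e) - R_t(e)$ share a common sign (depending on $e$ and the direction of monotonicity), so the series of absolute values telescopes:
\begin{equation*}
\sum_{t \geq 0} \bigl| R_t(e) - R_{t+1}(e) \bigr| = \bigl| R_0(e) - R_\infty(e) \bigr|,
\end{equation*}
where $R_\infty(e) := \lim_{t \to \infty} R_t(e)$ exists (possibly in $[0,\infty]$) by monotonicity, and is a.s. finite by the standing hypothesis.

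Next I would swap the order of summation in the definition of $\Gamma$. Since all terms are non-negative, Tonelli's theorem (or equivalently the unconditional reordering of non-negative double series) gives
\begin{equation*}
\Gamma = \sum_{t \geq 0,\, e \in E} \bigl| R_t(e) - R_{t+1}(e) \bigr| = \sum_{e \in E} \sum_{t \geq 0} \bigl| R_t(e) - R_{t+1}(e) \bigr| = \sum_{e \in E} \bigl| R_0(e) - R_\infty(e) \bigr|,
\end{equation*}
which is finite almost surely by assumption. Hence condition (\ref{condition}) of Theorem \ref{mainthm} holds a.s.

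The remaining hypotheses of Theorem \ref{mainthm}, namely that the RWCE is proper and bounded from above on the simple, undirected, locally finite, connected graph $G$, are assumed in the corollary. Applying Theorem \ref{mainthm} directly then yields recurrence (resp.\ transience) of $\{\langle X_t, G_t\rangle\}_{t \geq 0}$ whenever the weighted random walk on $(G, C_0)$ is recurrent (resp.\ transient). There is no substantive obstacle here; the only content beyond citing Theorem \ref{mainthm} is the telescoping identity, which is immediate from the sign-coherence of the monotone increments and legitimizes the swap of summations.
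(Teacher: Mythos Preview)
Your proof is correct and essentially identical to the paper's: both observe that edgewise monotonicity makes the inner sum over $t$ telescope to $|R_0(e)-R_\infty(e)|$, so that $\Gamma = \sum_{e}|R_0(e)-R_\infty(e)| < \infty$ a.s., and then invoke Theorem~\ref{mainthm} directly. Your write-up is slightly more explicit about the telescoping and the use of Tonelli, but there is no difference in substance.
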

		\begin{proof}
		This follows immediately from Theorem~\ref{mainthm}. The edgewise limits exist almost surely by the monotonicity assumption, which also guarantees that 
		$$ \sum_{t \geq 0, \,e \in E}|R_t(e)-R_{t+1}(e)| = \sum_{e \in E}\left|R_{0}(e)-R_{\infty}(e)\right|.$$
	\end{proof}
	
	\begin{remark}
		The statement of Corollary~\ref{cor1} does not require the RWCE to be monotone increasing/decreasing. Indeed, the direction of monotonicity can be different for different edges.
	\end{remark}
	
	If we limit our interest to non-adaptive RWCEs on graphs with bounded degree, we can further alleviate the condition of summing over all edges in (\ref{condition}). The proof relies on the observation that the transition probabilities are ``local" functions of the weight configuration in the sense that they only depend on the weights of the edges adjacent to the current position of the walk. 
	
	\begin{corollary}
		Let $G=(V, E)$ be any simple, undirected, connected graph of bounded degree. Let $\left\{\left\langle X_{t}, G_{t}\right\rangle\right\}_{t \geq 0}$ be any proper, bounded from above and non-adaptive RWCE on $G$. Assume that  for each $e \in E$, we have $C_t(e) \to C_{\infty}(e)$ almost surely as $t \to \infty$ and the random variable
		$$ \sum_{t \geq 0} \sup_{e \in E} |R_t(e)-R_{\infty}(e)| $$
		is finite almost surely, where $R_{\infty}:=1/C_{\infty}$. Then, $\left\{\l X_{t},G_t \r \right\}_{t \geq 0}$ is recurrent (resp.~transient), provided the weighted random walk on $(G,C_{\infty})$ is recurrent (resp.~transient).
	
	\end{corollary}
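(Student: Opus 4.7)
The strategy is to reduce the corollary to Theorem~\ref{mainthm} by constructing an auxiliary RWCE whose edge-weights change only \emph{locally} near the walker. Non-adaptivity enters decisively through Lemma~\ref{nonadaptive}: conditional on the environment $\mathscr{G}_\infty$, the walk is a time-inhomogeneous Markov chain whose transition at step $t$ depends only on the restriction of $C_t$ to edges incident to $X_t$. Hence one may alter $C_t$ on any edge not incident to $X_t$ without changing the law of the walk, and the bounded-degree assumption is exactly what converts the $\sup_e$-type bound in the hypothesis into the $\sum_e$-type bound demanded by Theorem~\ref{mainthm}.

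\textbf{Construction and verification of (\ref{condition}).} Let $D := \sup_{v \in V} \deg(v) < \infty$ and, for $v \in V$, write $N(v) := \{e \in E : v \in e\}$. Set $X_0' := X_0$ and inductively generate $X_{t+1}'$ from $X_t'$ using the transition $P(X_t', \cdot\,; C_t)$; by non-adaptivity and Lemma~\ref{nonadaptive}, the process $(X_t')$ has the same law as $(X_t)$ (indeed, they may be coupled to coincide). Define
\[ C_t'(e) := \begin{cases} C_t(e), & e \in N(X_t'), \\ C_\infty(e), & e \notin N(X_t'), \end{cases} \qquad G_t' := (V, E, C_t'). \]
Since $C_t'$ agrees with $C_t$ on every edge incident to $X_t'$, the process $\{\l X_t', G_t'\r\}_{t \geq 0}$ is a bona-fide RWCE with respect to its own natural filtration, and it is proper and bounded from above. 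Any edge $e$ with $R_t'(e) \neq R_{t+1}'(e)$ must lie in $N(X_t') \cup N(X_{t+1}')$, and on each such edge the change is at most $|R_t(e) - R_\infty(e)| + |R_\infty(e) - R_{t+1}(e)|$. Summing over edges using $|N(v)| \leq D$ and then over $t$,
\[ \sum_{t \geq 0,\, e \in E} |R_t'(e) - R_{t+1}'(e)| \;\leq\; 2D \sum_{t \geq 0} \sup_{e \in E} |R_t(e) - R_\infty(e)| \;<\; \infty \quad \text{almost surely.} \]

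\textbf{Conclusion and main obstacle.} The initial configuration $C_0'$ differs from $C_\infty$ on at most $D$ edges, so by Lemma~\ref{resistchange} the weighted random walk on $(G, C_0')$ inherits the recurrence/transience of the walk on $(G, C_\infty)$. Theorem~\ref{mainthm} applied to $\{\l X_t', G_t'\r\}_{t \geq 0}$ then transfers this property to the auxiliary RWCE, and the distributional coupling yields the conclusion for the original walk $(X_t)$. The delicate point is to set up the auxiliary process so that it is \emph{simultaneously} a valid RWCE (adapted to its own natural filtration), equidistributed with the original walk, and satisfies condition (\ref{condition}); all three requirements rest on non-adaptivity, which is what permits us to freeze the weights of edges away from the walker (to their limits $C_\infty$) without disturbing either the walk's dynamics or its law.
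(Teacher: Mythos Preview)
Your proposal is correct and follows essentially the same route as the paper: construct the auxiliary RWCE by freezing $C_t'$ to $C_\infty$ off $N(X_t')$, invoke Lemma~\ref{nonadaptive} to match laws, use bounded degree to convert the $\sup_e$-bound into condition~(\ref{condition}), and finish with Lemma~\ref{resistchange} and Theorem~\ref{mainthm}. The only slip is the constant in your displayed estimate: the per-step contribution is at most $2D(\Delta_t+\Delta_{t+1})$ with $\Delta_t:=\sup_e|R_t(e)-R_\infty(e)|$, so the sum over $t$ is bounded by $4D\sum_t\Delta_t$ rather than $2D\sum_t\Delta_t$---an inconsequential factor of two.
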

	
	\begin{proof}
		Set $\mathscr{G}_{\infty} := \sigma \left(C_s : s \geq 0 \right).$ By Lemma~\ref{nonadaptive}, we can assume that for any $t \geq 0$ and $y \in V$, 
		\begin{equation}{\label{dynamX}}
			 \mathbb{P}\left[ X_{t+1}=y \mid \sigma \left(\mathscr{G}_{\infty}, \mathscr{F}_t\right) \right] = P(X_t,y;C_t).
			\end{equation}
		We now construct an auxiliary RWCE $\left\{\l X_t^{\prime}, G_t^{\prime} \r\right\}_{t \geq 0}$ such that $\left\{X_t^{\prime} \right\}_{t \geq 0}$ has the same distribution as $\left\{X_{t}\right\}_{t \geq 0}$. In particular, define $X_{0}^{\prime}:=X_{0}$ and for any $t \geq 0$, set $G_t^{\prime}=(V,E,C^{\prime}_t)$ where
			\begin{equation}{\label{defCprime}}
				C_{t}^{\prime}(e):= \begin{cases}C_{t}(e), & \text{ if }e \text { is adjacent to } X_{t}^{\prime}, \\ C_{\infty}(e), & \text{ if } e \text { is not adjacent to } X_{t}^{\prime},\end{cases}
			\end{equation}
		and for all $t \geq 0$ and $y \in V$, 
		$$ \mathbb{P} \left[X_{t+1}^{\prime} = y \mid \sigma \left( \mathscr{F}_{\infty}, \mathscr{F}_{t}^{\prime}\right) \right]=  P(X_{t}^{\prime},y; C_{t}^{\prime}),$$ 
		where $\mathscr{F}_{\infty} := \sigma \left( \l X_t,G_t \r : t \geq 0 \right)$ and $\mathscr{F}^{\prime}_t := \sigma \left( \l X_s^{\prime},G^{\prime}_s \r : 0 \leq s \leq t \right)$. It is obvious that $\left\{\l X^{\prime}_t, G^{\prime}_t \r\right\}_{t \geq 0}$ is an adaptive RWCE. Moreover, the definition in (\ref{defCprime}) guarantees that for all $t \geq 0$ and $y \in V$, 
		$$ \mathbb{P} \left[X_{t+1}^{\prime} = y \mid \sigma \left( \mathscr{F}_{\infty}, \mathscr{F}_{t}^{\prime}\right) \right]=  P(X_{t}^{\prime},y; C_{t}^{\prime}) = P(X_{t}^{\prime},y;C_{t}),$$ 
	and hence,
	$$ \mathbb{P} \left[X_{t+1}^{\prime} = y \mid \sigma \left( \mathscr{G}_{\infty}, X_0^{\prime}, \ldots, X^{\prime}_{t}\right) \right]= P(X_{t}^{\prime},y;C_{t}).$$
	This is same as the evolution dynamics for $\left\{X_t : t \geq 0\right\}$, conditioned on $\mathscr{G}_{\infty}$, as assumed in (\ref{dynamX}). Since, $X_0=X_0^{\prime}$, this shows that 
	$$ \mathcal{L} \left( \left\{X_t : t \geq 0\right\} \mid \mathscr{G}_{\infty} \right) = \mathcal{L} \left( \left\{X_t^{\prime} : t \geq 0\right\} \mid \mathscr{G}_{\infty} \right), $$
		and thus  $\left\{X_{t}^{\prime}\right\}_{t \geq 0}=\left\{X_{t}\right\}_{t \geq 0}$ in distribution. Therefore, it is enough to consider recurrence/transience of the RWCE  $\left\{\l X_t^{\prime}, G_t^{\prime} \r\right\}_{t \geq 0}$. We shall show that it satisfies the conditions of Theorem~\ref{mainthm}. 
		
		Let $d_{\text{max}}$ be the maximum degree of vertices in $G$, $R_t^{\prime}:=1/C_t^{\prime}$ and $\Delta_t := \sup_{e \in E} |R_t(e)-R_{\infty}(e)|$. Since, $\sum_{t \geq 0} \Delta_t < \infty$ almost surely, we have $R_{\infty}(e)< \infty$, i.e., $C_{\infty}(e)>0$ almost surely for all $e \in E$. This observation, along with the hypothesis that the walk $\left\{\l X_t,G_t \r \right\}_{t \geq 0}$ is proper, implies that $C^{\prime}_t(e)>0$ almost surely for all $e \in E$. Moreover,
		$$ C^{\prime}_t(e) \leq \max(C_t(e), C_{\infty}(e)) \leq \sup_{t \geq 0} C_t(e) < \infty,$$
		almost surely, since the original RWCE is bounded from above. 
		Finally, note that for any $e \in E$, we have	
		$$\left|R_{t}^{\prime}(e)-R_{t+1}^{\prime}(e)\right| \leq \left|R_{t}^{\prime}(e)-R_{\infty}(e)\right|  + \left|R_{t+1}^{\prime}(e)-R_{\infty}(e)\right|  \leq \Delta_{t}+\Delta_{t+1}.$$
		 Also, $R_{t}^{\prime}$ and $R_{t+1}^{\prime}$ differ on at most $2d_{\text{max}}$ edges which are adjacent to $X_t^{\prime}$ or $X^{\prime}_{t+1}$. Thus,
		$$
		\sum_{t \geq 0} \sum_{e \in E} \left|R_{t}^{\prime}(e)-R_{t+1}^{\prime}(e)\right| \leq 2 d_{\text{max}} \sum_{t \geq 0}\left(\Delta_{t}+\Delta_{t+1}\right) \leq 4 d_{\text{max}} \sum_{t \geq 0} \Delta_{t}.
		$$
		As the right-most term is finite almost surely, we conclude by Theorem~\ref{mainthm} that the RWCE $\left\{\l X_t^{\prime}, G_t^{\prime} \r\right\}_{t \geq 0}$ is recurrent (resp.~transient) if the weighted walk on $(G, C_0^{\prime})$ is recurrent (resp.~transient). Since, $C_{\infty}$ and $C^{\prime}_0$ differ on at most $d_{\text{max}}$ many edges, by Lemma~\ref{resistchange} the walk on  $(G, C_0^{\prime})$ is recurrent (resp.~transient) if and only if the walk on $(G, C_{\infty})$ is recurrent (resp.~transient). This completes the proof.
	\end{proof}
	
	The rest of the paper is devoted to the proof of Theorem~\ref{mainthm}. We end this section with a short overview of our proof. Let $\{\l X_t,G_t\r\}_{t \geq 0}$ be any RWCE on $G=(V,E)$ satisfying the conditions of \Cref{mainthm}. Fix any arbitrary vertex $a\in V$. To show recurrence (resp.~transience), it suffices by ellipticity to show that $a$ is almost surely visited infinitely (resp.~finitely) often. 
	To show that $a$ is almost surely visited infinitely (resp.~finitely) often, we use the standard technique  of constructing a super/sub-martingale of the form $\{f_t(X_t)\}_{t\geq 0}$ and then using the optional stopping theorem to bound the probability of return to $a$. This part of the proof borrows heavily from the tools of electrical network theory, a short summary of those tools is presented in Section~\ref{electrical}. The martingale technique that we use, presented in Section~\ref{single}, is an extension of the methods used in \cite{Amir2020}.

	\section{Electrical Network Theory}{\label{electrical}}
	
	In this section, we recall some important results on the characterization of recurrence or transience given by electrical network theory. We heavily follow \cite[Chapter 1]{Grimmett2018}, which includes detailed proofs and further material for the interested reader. Other standard references include \cite{Doyle1984, Lyons2017}. We begin with some notation. Fix any $G=(V, E)$ satisfying the usual assumptions (i.e., simple, undirected, locally finite and connected) and take any $C: E \rightarrow(0, \infty)$. Fix any $a \in V$, which we consider as the origin of $G$. For $n \geq 0$, let $V_{(n)}:=\{x \in V: d(a, x) \leq n\}$ and $\partial V_{(n)}:=\{x \in V: d(a, x)=n\}$, where $d$ is the shortest-path distance on $G$. Let $G_{(n)}=(V_{(n)},E_{(n)})$ be the finite subgraph of $G$ induced by $V_{(n)}$. Finally, let $\left\{X_{t}\right\}_{t=0}^{\infty}$ denote the weighted random walk on $(G, C)$. For this section, $\mathbb{P}_x$ will denote the law of $\left\{X_t : t \geq 0\right\}$, conditioned on $X_0=x \in V.$ For any $S \subseteq V$, let $ \tau_S := \inf \left\{t \geq 0 : X_t \in S\right\}$. When $S=\left\{x\right\}$ is a singleton, we shall write $\tau_x$ instead of $\tau_{\left\{x\right\}}$.  
	
	\begin{result}{\label{res1}}
	For any $x\in V_{(n)}$, let $v(x):= \mathbb{P}_x\left[\tau_{a} < \tau_{\partial V_{(n)}} \right]$ denote the probability that the walk, starting from $x$, visits the origin $a$ before reaching $\partial V_{(n)}$. Then, $v : V_{(n)}\to [0,1]$ is the unique harmonic function on $V_{(n)} \setminus \left(\{a\} \cup \partial V_{(n)}\right)$ satisfying the boundary conditions $v(a)=1$ and $v(x)=0$ for all $x \in \partial V_{(n)}$. Thus, $v(x)$ equals the voltage at $x$ when viewing $(G_{(n)},C)$ as an electrical network with $a \in V$ as the source (kept at voltage $1$), $\partial V_{(n)}$ as the sink (grounded/ kept at voltage $0$) and resistances $R:=1/C$. 
	\end{result}
	
	We further elaborate on the basic theory of finite electrical networks. For this part alone, let $G=(V, E)$ be finite. Fix $a \in V$ as the source and $b \in V \backslash\{a\}$ as the sink. The central objects of electrical network theory are $a\to b$ flows which are defined below.

\begin{definition}
	We say that $j: V^{2} \rightarrow \R$ is an $a \to b$ flow on $(G,C)$ if for all $\left\{x,y\right\} \in E$, we have $j(x, y) =-j(x, y); \; j(x, y)=0$ if $\left\{x,y\right\} \notin E;$ $J_{x}:=\sum_{y \sim x} j(x, y)=0$ for any $x \neq a,b$; $J_a \geq 0$.  
	One can check that $J_{b}=-J_{a}$ and we define $|j|:=J_{a}$ as the size of the flow $j$. We say that $j$ is a unit flow if $|j|=1$. Moreover, we assign an energy to the flow $j$, given by
	$$
	E(j):=\sum_{e \in E} j^{2}(e) R(e)
	$$
	where $R(e):=1/C(e)$ is the resistance of $e \in E$. 
\end{definition}	

\begin{result}{\label{res2}}
	Among all possible unit $a\to b$ flows on $(G, C)$, there is a unique flow $i$ which obtains the minimum possible energy. This minimizing flow is the unit current on the network $(G,C)$ with source $a$ and sink $b$. In other words, the minimizing flow $i$ satisfies the Kirchhoff potential law: Namely, we have $\sum_{k=1}^n i(x_k,x_{k+1})R(x_k,x_{k+1}) =0$ for any cycle $x_1, \ldots, x_n,x_{n+1}=x_1$ in $V$. This variational property is known as Thompson's principle.
	
	 Moreover, for any $\left\{x,y\right\}\in E$, the current flowing from $x$ to $y$ has the following probabilistic interpretation: It is the expected number of net crossings of $\left\{x,y\right\}$, in the direction from $x$ to $y$, by the walk $\left\{X_t : t \geq 0\right\}$ starting from $a$ before it reaches $b$. In other words,
	$$ i(x,y) = \mathbb{E}_a \sum_{t=1}^{\tau_b} \mathbbm{1}_{(X_{t-1}=x,X_t=y)} -  \mathbb{E}_a \sum_{t=1}^{\tau_b} \mathbbm{1}_{(X_{t-1}=y,X_t=x)}.$$
	We also have $|i(x,y)| \leq 1$ for all $x,y$. 

\end{result}

\begin{remark}
	The energy of the unit current, $\mathcal{R}(a, b):=\sum_{e \in E} i^{2}(e) R(e)$, is the effective resistance between $a$ and $b$ in $(G, C)$. Intuitively, the network $(G, C)$ can be reduced to a single resistor between $a$ and $b$ with resistance $\mathcal{R}(a, b)$. Also, we remark that the voltage difference between $a$ and  $b$ induced by $i$ will exactly equal $\mathcal{R}(a, b)$. In other words, the rescaled current $i/\mathcal{R}(a,b)$ induces the voltage $v$ on $V$ with $v(a)=1$ and $v(b)=0$. \textit{Ohm's Law} then implies that for any $x,y \in V$, we have $v(x)-v(y) = i(x,y)R(x,y)/\mathcal{R}(a,b).$
\end{remark}	
	
We are now ready to describe the connection between recurrence or transience and electrical network theory. Let $\mathcal{R}_{n}$ denote the effective resistance between $a$ and $\partial V_{(n)}$ in $\left(G_{(n)}, C\right)$. Indeed, $\partial V_{(n)}$ may not be a single vertex, and in this case we identify all vertices in $\partial V_{(n)}$ into a single vertex while removing any self-loops that occur: Let $\widetilde{E}_{(n)}$ be the edge-set after performing this process (in other words, $\widetilde{E}_{(n)}$ contains all edges in $E$ with at least one end-point in $V_{(n-1)}$). An extremely useful property of effective resistance is given by \textit{Rayleigh's monotonicity principle}, which states that the effective resistance of a network can only increase if we increase edge-resistances. It follows that $\left\{\mathcal{R}_{n}\right\}_{n=0}^{\infty}$ is non-decreasing, hence the limit $\mathcal{R}_{\infty}:=\lim _{n \rightarrow \infty} \mathcal{R}_{n}$ always exists. 

\begin{result}{\label{res3}}
For the setting described above,
$$ \mathbb{P}_a \left[ X_t =a \text{ for some } t \geq 1 \right] = 1 - \dfrac{1}{C(a)\mathcal{R}_{\infty}},$$
where $C(a):=\sum_{x \sim a} C(a,x)$. In particular, the weighted random walk on $(G,C)$ is recurrent if $\mathcal{R}_{\infty}=\infty$ and transient if $\mathcal{R}_{\infty}< \infty$. 
\end{result}

\begin{lemma}{\label{resistchange}}
	Consider two weight configurations $C_1$ and $C_2$ on $G=(V,E)$ satisfying the usual conditions, i.e., simple, undirected, locally finite and connected. Assume that $\sum_{e \in E} |R_1(e)-R_2(e)| < \infty$ where $R_i=1/C_i$ for $i=1,2$. Then the weighted random walk on $(G,C_1)$ is recurrent/transient if and only if the same is true for the walk on $(G,C_2)$.   
	\end{lemma}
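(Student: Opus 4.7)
The plan is to reduce the lemma to a comparison of effective resistances and to exploit Thomson's variational principle (\Cref{res2}) together with the electrical characterization of recurrence (\Cref{res3}). Let $S:=\sum_{e\in E}|R_1(e)-R_2(e)|<\infty$, fix an origin $a\in V$, and for $i=1,2$ let $\mathcal{R}_n^{(i)}$ be the effective resistance from $a$ to $\partial V_{(n)}$ in the finite network $(G_{(n)},C_i)$, with $\partial V_{(n)}$ identified to a single vertex as in \Cref{electrical}. By \Cref{res3}, it suffices to show that $\mathcal{R}_\infty^{(1)}:=\lim_n\mathcal{R}_n^{(1)}$ is infinite if and only if $\mathcal{R}_\infty^{(2)}:=\lim_n\mathcal{R}_n^{(2)}$ is. This will follow at once from the uniform bound $|\mathcal{R}_n^{(1)}-\mathcal{R}_n^{(2)}|\le S$ for all $n$, since the limits exist by Rayleigh monotonicity.

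To establish the bound, let $i_n^{(k)}$ denote the unit current on $(G_{(n)},C_k)$ from $a$ to (identified) $\partial V_{(n)}$, so that $\mathcal{R}_n^{(k)}=\sum_{e\in\widetilde E_{(n)}}(i_n^{(k)}(e))^2 R_k(e)$. Since $i_n^{(2)}$ is a unit $a\to\partial V_{(n)}$ flow on $(G_{(n)},C_1)$, Thomson's principle gives
\[
\mathcal{R}_n^{(1)} \;\le\; \sum_{e\in\widetilde E_{(n)}}\bigl(i_n^{(2)}(e)\bigr)^2 R_1(e) \;=\; \mathcal{R}_n^{(2)} + \sum_{e\in\widetilde E_{(n)}}\bigl(i_n^{(2)}(e)\bigr)^2\bigl(R_1(e)-R_2(e)\bigr).
\]
Using $|i_n^{(2)}(e)|\le 1$ from \Cref{res2} and the fact that $\widetilde E_{(n)}\subseteq E$, the second term is bounded in absolute value by $\sum_{e\in E}|R_1(e)-R_2(e)|=S$. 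Hence $\mathcal{R}_n^{(1)}\le\mathcal{R}_n^{(2)}+S$. The symmetric argument with the roles of $1$ and $2$ swapped gives $\mathcal{R}_n^{(2)}\le\mathcal{R}_n^{(1)}+S$, so $|\mathcal{R}_n^{(1)}-\mathcal{R}_n^{(2)}|\le S$ for every $n$.

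Letting $n\to\infty$, we obtain $|\mathcal{R}_\infty^{(1)}-\mathcal{R}_\infty^{(2)}|\le S<\infty$, so $\mathcal{R}_\infty^{(1)}=\infty$ if and only if $\mathcal{R}_\infty^{(2)}=\infty$. Invoking \Cref{res3} finishes the proof. The only conceptual subtlety is checking that the unit-current bound $|i(e)|\le 1$, which is what makes the Thomson-comparison argument work under merely an $\ell^1$ perturbation of resistances rather than an $\ell^\infty$ one, but this is already provided by \Cref{res2}; the rest is bookkeeping.
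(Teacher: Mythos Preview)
Your proof is correct and follows essentially the same approach as the paper's: both reduce to showing $|\mathcal{R}_n^{(1)}-\mathcal{R}_n^{(2)}|\le\sum_e|R_1(e)-R_2(e)|$ via Thomson's principle together with the bound $|i(e)|\le 1$ from \Cref{res2}, then invoke \Cref{res3}. The paper phrases the comparison as $|\inf_j E_1(j)-\inf_j E_2(j)|\le\sup_j|E_1(j)-E_2(j)|$ over flows with $|j(e)|\le1$, while you plug the minimizer for one network into the other as a test flow; these are two ways of writing the same argument.
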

	
	\begin{proof}
		Fix $a \in V$ as the origin and let $b_n$ denote the vertex obtained by identifying vertices in $\partial V_{(n)}$.  Let $\mathcal{R}_{i,n}$ be the effective resistance between $a$ and $b_n$ when the weight configuration is $C_i$, with $\mathcal{R}_{i, \infty} := \lim_{n \to \infty} \mathcal{R}_{i,n}$, for $i=1,2$. In light of Result~\ref{res3}, it is enough to show that $\lim_{n \to \infty} \left(\mathcal{R}_{1,n} - \mathcal{R}_{2, n} \right)$ is finite. Let 
		$$\mathcal{J}(a \to b_n) := \left\{ j \; : \; j \text{ is a unit } a \to b_n \text{ flow with } |j(x,y)| \leq 1, \text{ for all } \left\{x,y\right\} \in \widetilde{E}_{(n)}\right\}.$$
		 Then, Result~\ref{res2} implies that 
		\begin{align*}
			\rvert \mathcal{R}_{1,n} - \mathcal{R}_{2,n} \rvert &= \Bigg \rvert \inf_{j \in \mathcal{J}(a \to b_n)} \sum_{e \in \widetilde{E}_{(n)}} j^2(e)R_1(e) -  \inf_{j \in \mathcal{J}(a \to b_n)} \sum_{e \in \widetilde{E}_{(n)}} j^2(e)R_2(e) \Bigg \rvert \\
			& \leq \sup_{j \in \mathcal{J}(a \to b_n)} \sum_{e \in \widetilde{E}_{(n)}} j^2(e) |R_1(e)-R_2(e)| \leq \sum_{e \in E} |R_1(e)-R_2(e)| < \infty.
		\end{align*}
		Taking $n \to \infty$, we complete the proof.
	\end{proof}

	\section{Proof of the Main Result}{\label{single}}
 
As mentioned before, we will construct a super/sub-martingale and then apply the optional stopping theorem to derive  conditions for infinite/finite returns to the vertex $a$. Then, we will show that these conditions are satisfied assuming the condition of \Cref{mainthm}.
	
	\subsection{Super/sub-martingales}
	We first construct the desired super/sub-martingale. Fix the origin $a \in V$ and assume that the RWCE $\{\l X_t,G_t\r\}_{t\geq 0}$ is given. 
	For $n\geq 1, t\geq 0,$ we construct an electrical network on the graph $G_{(n)}$ as follows: For any $e \in E_{(n)}$, we set the conductance of $e$ as $C_t(e)$. We keep the source $a$ at voltage $1$ and $\partial V_{(n)}$ at voltage $0.$ For all $x\in V_{(n)},$ let $v_{n,t}(x)$ denote the (random) voltage at $x$ in the above constructed electrical network on $(G_{(n)},C_t)$. If $x\notin V_{(n)},$ we define $v_{n,t}(x):=0.$  
	
	According to Result~\ref{res1}, $v_{n,t}(x)$ equals the probability that the weighted random walk on $(G_{(n)},C_t)$, starting from $x$, will visit $a$ before hitting $\partial V_{(n)}$. In particular, the function $v_{n,t}$ is harmonic on $V_{(n-1)} \setminus \left\{a\right\}$, i.e.,
	\begin{equation}{\label{harmonic}}
		v_{n,t}(x) = \sum_{y: y \sim x} \dfrac{C_t(x,y)}{\sum_{z : z \sim x} C_t(x,z)} v_{n,t}(y), \; \; \forall \; x \in V_{(n-1)} \setminus \left\{a\right\}.
	\end{equation}
	Moreover, since $C_t$ is proper and $G$ is connected and locally finite, one can easily see that the weighted random walk on $(G_{(n)},C_t)$, starting from $x \in V_{(n-1)} \setminus \left\{a\right\}$, has positive probability to visit $a$ before hitting $\partial V_{(n)}$. In other words, $v_{n,t}(x) \in (0,1]$ for $x \in V_{(n-1)} \setminus \left\{a\right\}$. 
	
	Now, we define $\tau_n:=\inf\{t\geq 0: X_t \notin V_{(n-1)} \setminus \left\{a\right\}\}$ and recall that $\mathscr{F}_t=\sigma \left( \l X_s, G_s \r : s =0, \ldots, t \right)$ for all $t \geq 0$. The harmonicity of the function $v_{n,t}$ guarantees that 
	\begin{equation}{\label{harmonic2}}
		 \mathbb{E} \left[ v_{n,t}\left( X_{\tau_n \wedge (t+1)} \right) \Big \rvert \mathscr{F}_t \right] = v_{n,t}(X_{\tau_n \wedge t}), \; \forall \; t \geq 0.
\end{equation}
	 Unfortunately, for arbitrary $x\in V$, the sequence $\{v_{n,t}(x)\}_{t\geq 0}$ is not necessarily monotone and thus the process $\{v_{n,t}(X_{\tau_n \wedge t})\}_{t\geq 0}$ is not a super/sub-martingale.
	
	To bypass this difficulty and facilitate a more general class of super/sub-martingales, we consider the following approach. Take $G=(V,E)$ with origin $a$ and remove the vertex $a$ along with its adjacent edges. Let $V^{\prime}$ be any connected component of the resulting graph. Since $G$ is connected and locally-finite, there are finitely many components resulting from the removal of $a$. Moreover, regarding recurrence or transience, we will see that only the infinite components are of interest. Thus, assume that $V'$ is countably infinite and set $V^*= V^{\prime} \cup \left\{a\right\}$. 

	Let $G^* = \left( V^*,E^*\right)$ be the subgraph of $G$ induced by $V^*$. It is clear that $G^*$ is connected and for any $x,y \in V^* \setminus \left\{a\right\}$, there is a path in $G^*$ connecting $x$ and $y$ which does not visit $a$. Also, any path connecting $x \in V^*$ to $y \in V \setminus V^*$ must visit the vertex $a$.  For any $n \geq 0$, let 
	$$ V^{*}_{(n)} := \left\{ x \in V^{*} : d(x,a) \leq n \right\} = V^{*} \cap V_{(n)}, \;\; \partial V^{*}_{(n)} := \left\{ x \in V^{*} : d(x,a) = n \right\} = V^{*} \cap \partial V_{(n)}.$$
    For any time $t \geq 0$, we can construct an electrical network on $G^*$ with conductance configuration given by $C_t^*:= C_t \big \rvert_{E^*}$, keeping $a$ at voltage $1$ and the vertices in $\partial V_{(n)}^{*}$ grounded. If $v_{n,t}^{*}(x)$ is the voltage at vertex $x \in V_{(n)}^{*}$ in this network, then $v_{n,t}^{*}(x)$ is the probability that the weighted random walk on $(G^*, C_t^*)$, starting from $x$, will visit $a$ before hitting $\partial V_{(n)}^{*}$. Since any path in $G$ from $x$ to $V \setminus V^*$ has to visit $a$, it is obvious that $v_{n,t}^{*}(x)$ equals the probability that the weighted random walk on $\left(G_{(n)}, C_t \right)$, starting from $x$, will visit $a$ before hitting $\partial V_{(n)}$; in other words $v_{n,t}^{*}(x) = v_{n,t}(x)$ for all $x \in V_{(n)}^{*}$. Set,
	\begin{align*}
			\alpha_{n,t}^*:&=\sup_{x\in V^* \setminus\{a\}}\frac{1-v_{n,t+1}(x)}{1-v_{n,t}(x)}\geq 1,\\
		\beta_{n,t}^{*}:&=\inf_{x\in V^{*} \setminus\{a\}}\frac{1-v_{n,t+1}(x)}{1-v_{n,t}(x)}\leq 1,
	\end{align*}
	for all $n \geq 1, t \geq 0$, 
	where the inequalities follow by considering $x\in \partial V_{(n)}^*.$ Also, the quantities are well-defined (i.e., positive and finite) since $V^*$ is countably infinite and thus $v_{n,t}(x) \in (0,1)$ for any $x \neq a$ and $t \geq 0$. Note that $\alpha^*_{n,t}$ and $\beta^*_{n,t}$ are $\mathscr{F}_{t+1}$-measurable random variables.

    \begin{lemma}{\label{supersub}}
		Fix $n\geq 1$ and let 
		\begin{align*}
			A_t^{*(n)} &= \frac{1-v_{n,t}(X_t)}{\prod_{s=0}^{t-1}\alpha^*_{n,s}},\hspace{.2cm}
			B_t^{*(n)} = \frac{1-v_{n,t}(X_t)}{\prod_{s=0}^{t-1}\beta^*_{n,s}}
		\end{align*}
		for $t\geq 0.$ Then, $\{A^{*(n)}_{\tau_n \wedge t}\}_{t\geq 0}$ is a supermartingale and $\{B^{*(n)}_{\tau_n \wedge t}\}_{t\geq 0}$ is a submartingale with respect to the filtration $\{\mathscr{F}_t\}_{t\geq 0}$, provided $X_0 \in V^*$.
	\end{lemma}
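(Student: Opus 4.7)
The plan is to verify the desired supermartingale (resp.\ submartingale) inequality
\[
\mathbb{E}\bigl[A^{*(n)}_{\tau_n\wedge (t+1)}\mid \mathscr{F}_t\bigr]\le A^{*(n)}_{\tau_n\wedge t}\qquad\Bigl(\text{resp. }\mathbb{E}\bigl[B^{*(n)}_{\tau_n\wedge(t+1)}\mid \mathscr{F}_t\bigr]\ge B^{*(n)}_{\tau_n\wedge t}\Bigr)
\]
by combining the harmonicity identity~(\ref{harmonic2}) with the pointwise bounds packaged into the definitions of $\alpha^{*}_{n,t}$ and $\beta^{*}_{n,t}$. First I would split on $\{\tau_n\le t\}$, where both sides coincide and the inequality is trivial, so all the content lies on $\{\tau_n>t\}=\{X_t\in V_{(n-1)}\setminus\{a\}\}$. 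Since $V^*$ equals $\{a\}$ together with one connected component of $V\setminus\{a\}$, any edge joining $V^*$ to $V\setminus V^*$ must be incident to $a$; hence the walk started at $X_0\in V^*$ stays in $V^*$ until it first visits $a$. In particular on $\{\tau_n>t\}$ one has $X_t\in V^{*}_{(n-1)}\setminus\{a\}$ and the possible values of $X_{t+1}$ all lie in $V^*$ (either $a$ itself or some neighbour in $V^*\setminus\{a\}$).

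The key step is the pointwise bound, valid on $\{\tau_n>t\}$ and almost surely in $X_{t+1}$,
\[
\frac{1-v_{n,t+1}(X_{t+1})}{\alpha^{*}_{n,t}}\;\le\; 1-v_{n,t}(X_{t+1}).
\]
When $X_{t+1}\in V^*\setminus\{a\}$ this is immediate from the definition of $\alpha^{*}_{n,t}$ as a supremum, and when $X_{t+1}=a$ both sides vanish. Taking conditional expectation given $\mathscr{F}_t$, using~(\ref{harmonic2}) to replace $\mathbb{E}[v_{n,t}(X_{t+1})\mid \mathscr{F}_t]$ by $v_{n,t}(X_t)$ on $\{\tau_n>t\}$, and finally dividing by the $\mathscr{F}_t$-measurable positive factor $\prod_{s=0}^{t-1}\alpha^{*}_{n,s}$, one obtains exactly $\mathbb{E}[A^{*(n)}_{t+1}\mid \mathscr{F}_t]\le A^{*(n)}_t$. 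Combined with the trivial case on $\{\tau_n\le t\}$, this yields the supermartingale property. Integrability of $A^{*(n)}_{\tau_n\wedge t}$ is automatic since $1-v_{n,t}(\cdot)\in[0,1]$ and $\prod_s\alpha^{*}_{n,s}\ge 1$, so $A^{*(n)}_{\tau_n\wedge t}\in[0,1]$.

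The submartingale statement for $B^{*(n)}$ follows from the mirror-image argument with $\alpha^{*}_{n,t}$ replaced by $\beta^{*}_{n,t}$ and both inequalities reversed; the pointwise bound $(1-v_{n,t+1}(x))/\beta^{*}_{n,t}\ge 1-v_{n,t}(x)$ relies on $\beta^{*}_{n,t}>0$, which holds because for $x\in V^*\setminus V_{(n-1)}$ the defining ratio equals $1$, while on the finite set $V^{*}_{(n-1)}\setminus\{a\}$ each $v_{n,t}(x)$ lies in $(0,1)$. Integrability for each fixed $t$ follows from the a.s.\ finiteness of $1/\prod_{s=0}^{t-1}\beta^{*}_{n,s}$.

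The only real obstacle is the careful book-keeping around two issues: (i) the fact that $\alpha^{*}_{n,t}$ and $\beta^{*}_{n,t}$ are only $\mathscr{F}_{t+1}$-measurable, which forces one to establish the inequality pointwise \emph{before} taking conditional expectation rather than pulling these factors out of $\mathbb{E}[\,\cdot\mid \mathscr{F}_t]$; and (ii) the boundary cases ($X_{t+1}=a$, $x\in\partial V_{(n)}^*$, and $x\notin V_{(n)}$), which are easily handled but must be checked to confirm the definitions of $\alpha^{*}_{n,t}$ and $\beta^{*}_{n,t}$ apply to every reachable $X_{t+1}$.
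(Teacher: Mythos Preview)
Your proof is correct and follows essentially the same route as the paper: establish the pointwise inequality $A^{*(n)}_{t+1}\le (1-v_{n,t}(X_{t+1}))/\prod_{s=0}^{t-1}\alpha^*_{n,s}$ on $\{\tau_n>t\}$ (splitting into $X_{t+1}\in V^*\setminus\{a\}$ and $X_{t+1}=a$), then invoke~(\ref{harmonic2}) and handle $\{\tau_n\le t\}$ trivially. One small caveat: almost-sure finiteness of $1/\prod_{s=0}^{t-1}\beta^{*}_{n,s}$ does not by itself give $L^1$-integrability of $B^{*(n)}_t$, but this is harmless since $B^{*(n)}_t\ge 0$ makes the conditional expectations well-defined regardless (which is exactly how the paper justifies it).
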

	
	\begin{proof}
	We start by observing that both $A^{*(n)}_t$ and $B^{*(n)}_t$ are $\mathscr{F}_t$-measurable while $0 \leq A_t^{*(n)} \leq 1$ and $B^{*(n)}_t \geq 0$, hence taking conditional expectations is well-defined. It now suffices to prove only the supermartingale case as the rest of the proof for the submartingale case is identical.   Firstly, note that, on the event $(X_{t+1} \in V^* \setminus \left\{a\right\})$, we have
		\begin{align*}
			A^{*(n)}_{t+1} &= \frac{1-v_{n,t+1}(X_{t+1})}{\prod_{s=0}^{t}\alpha^*_{n,s}} = \frac{1-v_{n,t}(X_{t+1})}{\prod_{s=0}^{t-1}\alpha^*_{n,s}} \dfrac{1-v_{n,t+1}(X_{t+1})}{(1-v_{n,t}(X_{t+1})) \alpha^*_{n,t}} \leq \frac{1-v_{n,t}(X_{t+1})}{\prod_{s=0}^{t-1}\alpha^*_{n,s}},
		\end{align*}
		whereas the inequality is trivially true on $(X_{t+1}=a)$. Hence, on $(X_0 \in V^*, \tau_n >t)$, 
		\begin{align*}
			\E\left[A^{*(n)}_{\tau_n \wedge (t+1)}\Big \rvert \mathscr{F}_t\right]&\leq \E\left[\frac{1-v_{n,t}(X_{t+1})}{\prod_{s=0}^{t-1}\alpha^*_{n,s}}\Bigg \rvert \mathscr{F}_t\right]=\frac{1}{\prod_{s=0}^{t-1}\alpha^*_{n,s}}\E\left[1-v_{n,t}(X_{t+1})\big \rvert \mathscr{F}_t\right] =A^{*(n)}_{\tau_n \wedge t},
		\end{align*}
	where the last equality follows from (\ref{harmonic2}). Finally, on the event $(X_0 \in V^*, \tau_n \leq t)$, 
		\begin{align*}
			\E \left[A^{*(n)}_{\tau_n \wedge (t+1)}\Big \rvert  \mathscr{F}_t\right] = \E \left[A^{*(n)}_{\tau_n} \Big \rvert \mathscr{F}_t\right] = A_{\tau_n}^{*(n)} = A_{\tau_n \wedge t}^{*(n)}
		\end{align*}
		as desired and we conclude our proof.
	\end{proof}

	\subsection{Optional Stopping Theorem}
	We now intend to apply the \textit{Optional Stopping Theorem} to the super/sub-martingale constructed above. For the results of this section, we remark that it suffices to only assume that the given RWCE is proper and elliptic. 
	We begin with the supermartingale $\{A_{\tau_n \wedge t}^{*(n)}\}_{t\geq 0}$, continuing with the notations introduced before the statement of Lemma~\ref{supersub}.
	\begin{lemma}{\label{ost1}}
		Let $\{\l X_t,G_t\r\}_{t\geq 0}$ be any proper and elliptic RWCE on $G=(V,E)$. 
		Assume that there exists $N \in \mathbb{N}$ such that 
		$$ \sup_{n \geq N} \prod_{t \geq 0} \alpha^*_{n,t} < \infty,$$
		almost surely and $v_{n,t}(x)\to 1$ almost surely as $n\to\infty$ for any $x\in V$ and $t \geq 0$. Then for any $T \geq 0$, 
		$$ \mathbb{P}\left(X_t=a \text{ for some } t \geq T \mid \mathscr{F}_T\right) =1,$$
		almost surely on the event $(X_T \in V^*)$.
	\end{lemma}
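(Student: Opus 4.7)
The plan is to apply the optional stopping theorem to the bounded supermartingale from Lemma~\ref{supersub}, shifted to start at time $T$, and then take $n \to \infty$ using the hypothesis $v_{n,T}(x) \to 1$. Since the conclusion is immediate on $\{X_T = a\}$, I would first reduce to the event $E_T := \{X_T \in V^{*} \setminus \{a\}\}$. Define the shifted first-exit time $\tau_n^{(T)} := \inf\{t \geq T : X_t \notin V_{(n-1)} \setminus \{a\}\}$, which is almost surely finite by ellipticity and the finiteness of $V_{(n-1)}$, and let $A_n := \{X_{\tau_n^{(T)}} \in \partial V_{(n)}^{*}\}$. Since any neighbour in $G$ of a vertex in $V^{*} \setminus \{a\}$ again lies in $V^{*}$, the walk remains in $V^{*}$ until it hits $a$; consequently $\bigcap_{n \geq 1} A_n$ coincides on $E_T$ with the event that $X_t \neq a$ for all $t \geq T$.

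Next, by mimicking the proof of Lemma~\ref{supersub} on the time window $[T,\tau_n^{(T)}]$, the process $\{A^{*(n)}_{\tau_n^{(T)}\wedge t}\}_{t \geq T}$ is a $[0,1]$-valued supermartingale with respect to $\{\mathscr{F}_t\}_{t \geq T}$ on $E_T$. Optional stopping combined with bounded convergence then yields
\[
\mathbb{E}\bigl[A^{*(n)}_{\tau_n^{(T)}} \,\big|\, \mathscr{F}_T\bigr] \leq A^{*(n)}_{T} = \frac{1 - v_{n,T}(X_T)}{\prod_{s=0}^{T-1}\alpha^{*}_{n,s}}.
\]
Evaluating the left side: on $A_n^{c}$, $X_{\tau_n^{(T)}} = a$ gives $A^{*(n)}_{\tau_n^{(T)}} = 0$, while on $A_n$, $v_{n,\tau_n^{(T)}}(X_{\tau_n^{(T)}}) = 0$ gives $A^{*(n)}_{\tau_n^{(T)}} = 1/\prod_{s=0}^{\tau_n^{(T)}-1}\alpha^{*}_{n,s}$. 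Setting $M_n := \prod_{s \geq 0}\alpha^{*}_{n,s}$ and using $\alpha^{*}_{n,s} \geq 1$ (which also makes the denominator on the right at least $1$), the bound above reduces to
\[
\mathbb{E}\bigl[\mathbbm{1}_{A_n}/M_n \,\big|\, \mathscr{F}_T\bigr] \leq 1 - v_{n,T}(X_T).
\]

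Finally, I would let $n \to \infty$. Writing $M := \sup_{n \geq N}M_n$, which is almost surely finite by hypothesis (and at least $1$), the bound $1/M_n \geq 1/M$ gives
\[
\mathbb{E}\bigl[\mathbbm{1}_{A_n}/M \,\big|\, \mathscr{F}_T\bigr] \leq 1 - v_{n,T}(X_T) \to 0 \quad \text{a.s.}
\]
as $n \to \infty$, where the hypothesis $v_{n,T}(x) \to 1$ a.s.~for each $x \in V$ lifts to $v_{n,T}(X_T) \to 1$ since $V$ is countable. The random variables $\mathbbm{1}_{A_n}/M$ are bounded by $1$ and decrease to $\mathbbm{1}_{\bigcap_n A_n}/M$, so the conditional bounded convergence theorem identifies the limit of the left-hand side as $\mathbb{E}[\mathbbm{1}_{\bigcap_n A_n}/M \,|\, \mathscr{F}_T]$, which must therefore vanish a.s. Since $M > 0$, this forces $\mathbbm{1}_{\bigcap_n A_n} = 0$ a.s.~on $E_T$, giving the claim.

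The main obstacle is that $M_n$ is $\mathscr{F}_{\infty}$-measurable rather than $\mathscr{F}_T$-measurable, so one cannot directly factor $1/M_n$ out of the conditional expectation given $\mathscr{F}_T$. The crucial maneuver is to replace $M_n$ by the uniform (a.s.~finite) dominator $M$, and then to exploit that a non-negative random variable with a.s.~zero conditional expectation must itself vanish almost surely.
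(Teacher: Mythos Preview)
Your proof is correct and follows essentially the same route as the paper's: apply optional stopping to the bounded supermartingale of Lemma~\ref{supersub} and send $n\to\infty$ using $v_{n,T}(X_T)\to 1$. The only difference is in how the non-$\mathscr{F}_T$-measurability of $\prod_{s}\alpha^{*}_{n,s}$ is handled: the paper truncates on the event $\{\sup_{m\ge N}\prod_{s\ge 0}\alpha^{*}_{m,s}\le C\}$ and lets $C\to\infty$ at the end, whereas you keep $1/M$ inside the conditional expectation and invoke that a non-negative random variable with a.s.\ zero conditional expectation must vanish---both are standard and equivalent devices. One small imprecision worth fixing: the events $A_n$ are decreasing, and $\bigcap_n A_n$ coincides on $E_T$ with $\{X_t\neq a\ \forall\,t\ge T\}$, only once $n>d(a,X_T)$; for smaller $n$ you have $\tau_n^{(T)}=T$ with $X_T\notin\partial V_{(n)}^{*}$, so $A_n$ fails. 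This does not affect the limiting argument.
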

	
	\begin{proof}
		Since $\alpha^*_{n,t}\geq 1$, note that the conditions of the lemma hold true for any sub-process $\{\l X_t,G_t\r\}_{t\geq T}$ where $T>0.$ Hence, it suffices to show that almost surely on the event $(X_0 \in V^*)$, 
		$$\mathbb{P}\left(X_t=a \text{ for some } t \geq 0 \mid \mathscr{F}_0\right) =1.$$
  Fix $n\geq N$, $C \in (0,\infty)$ and recall the supermartingale $\{A_{\tau_n \wedge t}^{*(n)}\}_{t\geq 0}$ from Lemma~\ref{supersub}. Since $0 \leq  A^{*(n)}_{\tau_n \wedge t}\leq 1$ for all $t\geq 0$ and $\tau_n < \infty $ almost surely (by ellipticity of the RWCE and finiteness of $V_{(n-1)}$), the optional stopping theorem gives $\E[A_{\tau_n}^{*(n)} \mid \mathscr{F}_0] \mathbbm{1}_{(X_0 \in V^*)}\leq A^{*(n)}_0 \mathbbm{1}_{(X_0 \in V^*)}.$ Hence, on the event $(X_0 \in V^*)$, we have the following almost surely:
		\begin{align*}
			1-v_{n,0}(X_0) = A^{*(n)}_0\geq \E\left[\frac{1-v_{n,\tau_n}(X_{\tau_n})}{\prod_{t=0}^{\tau_n-1} \alpha^*_{n,t}} \Bigg \rvert \mathscr{F}_0\right] &\geq \E\left[\frac{1-v_{n,\tau_n}(X_{\tau_n})}{\prod_{t=0}^{\tau_n-1} \alpha^*_{n,t}}, \sup_{m \geq N} \prod_{t \geq 0} \alpha^*_{m,t} \leq C  \Bigg \rvert \mathscr{F}_0 \right]  \\
			& \geq \dfrac{1}{C} \mathbb{E} \left[ X_{\tau_n} \in \partial V_{(n)}, \sup_{m \geq N} \prod_{t \geq 0} \alpha^*_{m,t} \leq C  \Bigg \rvert \mathscr{F}_0\right],
		\end{align*}
		which can be rearranged as 
		$$ \mathbb{E} \left[ X_{\tau_n} \in \partial V_{(n)}, \sup_{m \geq N} \prod_{t \geq 0} \alpha^*_{m,t} \leq C  \Bigg \rvert \mathscr{F}_0\right] \leq C 	(1-v_{n,0}(X_0)).$$
		Since, $v_{n,0}(X_0)$ converges almost surely to $1$, the right hand side of the above converges to $0$ for any fixed $C \in (0,\infty)$, as we take $n \to \infty$. Now we take $C \to \infty$ and use the hypothesis of the lemma to conclude that $\lim_{n \to \infty} \mathbb{P}(X_{\tau_n} \in \partial V_{(n)}\mid \mathscr{F}_0) =0$, or equivalently  $\lim_{n \to \infty} \mathbb{P}(X_{\tau_n} =a \mid \mathscr{F}_0) =1,$ almost surely on $(X_0 \in V^*)$. This completes the proof.
	\end{proof}
	Next, we proceed similarly with the submartingale $\{B^{*(n)}_{\tau_n \wedge t}\}_{t\geq 0}.$

		\begin{lemma}{\label{ost2}}
		Let $\{\l X_t,G_t\r\}_{t\geq 0}$ be any proper and elliptic RWCE on $G=(V,E)$. Assume that there exists a constant $\delta >0$, $N \in \mathbb{N}$ and $A \in \mathscr{F}_0$ such that 
		$$ \inf_{n \geq N} \prod_{t \geq 0} \beta^{*}_{n,t} \geq \delta, \; \text{ almost surely on the event }A.$$
		Then for any $T \geq 0$, we have the following on the event $A \cap (X_T \in V^*\setminus \{a\})$ : 
		$$ \P\left( X_t \neq a, \; \forall \; t \geq T \mid \mathscr{F}_T\right)  \geq \delta \inf_{x \in \partial V_{(1)}^{*}, \; n \geq N} (1-v_{n,T}(x)), \;\; \text{ almost surely}.$$
	\end{lemma}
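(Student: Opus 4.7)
The plan is to mimic the proof of Lemma~\ref{ost1}, but applied to the submartingale $\{B^{*(n)}_{\tau_n \wedge t}\}_{t \geq 0}$ from Lemma~\ref{supersub}. First, since $\beta^*_{n,s} \leq 1$, on $A$ one has $\prod_{s \geq T}\beta^*_{n,s} \geq \prod_{s \geq 0}\beta^*_{n,s} \geq \delta$, so the hypotheses pass to the sub-process $\{\l X_{T+s}, G_{T+s}\r\}_{s \geq 0}$ and it suffices to prove the statement at $T=0$.

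Fix $n \geq N$ large enough that $X_0 \in V^{*}_{(n-1)} \setminus \{a\}$. On $A$ the denominator $\prod_{s=0}^{(\tau_n \wedge t)-1} \beta^*_{n,s}$ is bounded below by $\delta$, hence $B^{*(n)}_{\tau_n \wedge t} \leq 1/\delta$; combined with $\tau_n < \infty$ almost surely (by ellipticity and finiteness of $V_{(n-1)}$), the submartingale property and dominated convergence give
\begin{equation*}
\mathbbm{1}_A\, \E\bigl[B^{*(n)}_{\tau_n} \,\big|\, \mathscr{F}_0\bigr] \geq \mathbbm{1}_A\, B^{*(n)}_0 = \mathbbm{1}_A\,\bigl(1 - v_{n,0}(X_0)\bigr).
\end{equation*}
At the stopping time, $v_{n,\tau_n}(a) = 1$ and $v_{n,\tau_n}(x) = 0$ for $x \in \partial V_{(n)}$, so $B^{*(n)}_{\tau_n} = 0$ on $\{X_{\tau_n}=a\}$ while $B^{*(n)}_{\tau_n} \leq 1/\delta$ on $A \cap \{X_{\tau_n}\in \partial V_{(n)}\}$. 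Rearranging yields, almost surely on $A \cap (X_0 \in V^{*}_{(n-1)} \setminus \{a\})$,
\begin{equation*}
\P\bigl(X_{\tau_n} \in \partial V_{(n)} \,\big|\, \mathscr{F}_0\bigr) \geq \delta\bigl(1 - v_{n,0}(X_0)\bigr).
\end{equation*}

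To pass to the infinite horizon, I would observe that any path from $V^* \setminus \{a\}$ to $V \setminus V^*$ must traverse $a$, and ellipticity precludes the walk from remaining in the finite set $V^{*}_{(n-1)} \setminus \{a\}$ forever without visiting $a$. Hence the events $\{X_{\tau_n} \in \partial V_{(n)}\}$ are decreasing in $n$ with intersection equal to $\{X_t \neq a,\, \forall\, t \geq 0\}$ on $(X_0 \in V^* \setminus \{a\})$, and taking $n \to \infty$ gives, on $A \cap (X_0 \in V^* \setminus \{a\})$,
\begin{equation*}
\P\bigl(X_t \neq a,\, \forall\, t \geq 0 \,\big|\, \mathscr{F}_0\bigr) \geq \delta \inf_{n \geq N}\bigl(1 - v_{n,0}(X_0)\bigr).
\end{equation*}

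Finally, to replace $X_0$ by the infimum over $\partial V^{*}_{(1)}$, I would apply the strong Markov property to the weighted walk on $(G^{*}_{(n)}, C_0^{*})$ at the stopping time $\sigma := \inf\{t \geq 0 : X_t \in \partial V^{*}_{(1)} \cup \{a\}\}$. Because every path in $G^{*}$ from $V^* \setminus \{a\}$ to $a$ must first cross $\partial V^{*}_{(1)}$, this decomposition yields
\begin{equation*}
v_{n,0}(X_0) = \sum_{y \in \partial V^{*}_{(1)}} \P_{X_0}\!\bigl[X_\sigma = y,\, \sigma < \tau_{\partial V_{(n)}}\bigr]\, v_{n,0}(y) \leq \max_{y \in \partial V^{*}_{(1)}} v_{n,0}(y),
\end{equation*}
so that $1 - v_{n,0}(X_0) \geq \inf_{y \in \partial V^{*}_{(1)}}(1 - v_{n,0}(y))$. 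Substituting this and translating back to the general origin $T$ via the sub-process reduction gives the claimed bound. I expect the main delicate point to be justifying the limit exchange in Optional Stopping, which is handled cleanly by the uniform bound $B^{*(n)}_{\tau_n \wedge t} \leq 1/\delta$ valid on the event $A$.
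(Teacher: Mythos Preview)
Your proposal is correct and follows essentially the same approach as the paper: reduce to $T=0$, apply optional stopping to the bounded submartingale $\{B^{*(n)}_{\tau_n\wedge t}\}$ on the event $A$ to get $\P[X_{\tau_n}\in\partial V_{(n)}\mid\mathscr{F}_0]\geq\delta(1-v_{n,0}(X_0))$, let $n\to\infty$, and then replace $1-v_{n,0}(X_0)$ by $\inf_{y\in\partial V^*_{(1)}}(1-v_{n,0}(y))$. The only cosmetic difference is that the paper packages the last comparison as an appeal to Lemma~\ref{flowdirec}, whereas you reprove that inequality inline via the strong Markov decomposition at the hitting time of $\partial V^*_{(1)}$; your explicit monotonicity argument for $\{X_{\tau_n}\in\partial V_{(n)}\}$ is also a bit more detailed than the paper's one-line ``taking $n\to\infty$''.
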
	
	\begin{proof}
	Similar to the proof of Lemma~\ref{ost1}, it is enough to consider $T=0$. Introduce the notation 
		$$ V^{\text{sup}}_t := \sup_{x \in \partial V_{(1)}^*, n \geq N} v_{n,t}(x) = \sup_{x \in V^* \setminus \left\{a\right\}, n \geq N} v_{n,t}(x),$$
		where the second equality in the above equation follows from Lemma~\ref{flowdirec}. Fix some $n\geq N$ and recall the submartingale $\{B^{*(n)}_{\tau_n \wedge t}\}_{t\geq 0}$ from Lemma~\ref{supersub}. Under the hypothesis for this lemma, on the event $A$, we have $0 \leq B_{\tau_n \wedge t}^{*(n)}\leq 1/\delta<\infty,$ and hence the optional stopping theorem gives $\E[B_{\tau_n}^{*(n)} \mid \mathscr{F}_0]\geq B^{*(n)}_0$.
		Thus, we have almost surely on $A$, 
		\begin{align*}
			1-v_{n,0}(X_0) =B^{*(n)}_0 \leq \E\left[\frac{1-v_{n,\tau_n}(X_{\tau_n})}{\prod_{t=0}^{\tau_n-1} \beta^*_{n,t}} \Bigg \rvert \mathscr{F}_0 \right]\leq \frac{\P[X_{\tau_n} \in \partial V_{(n)}\mid \mathscr{F}_0]}{\delta},
		\end{align*}
		which can be rearranged as 
		$\P[X_{\tau_n} \in \partial V_{(n)}\mid \mathscr{F}_0]\geq \delta(1- v_{n,0}(X_0))$.  Taking $n \to \infty$, we obtain 
     $$\P \left[X_t \neq a, \; \forall \; t \geq 0 \mid \mathscr{F}_0\right]\geq \delta(1-V^{\text{sup}}_0)$$ on the event $A \cap (X_0 \in V^*\setminus\{a\})$.
	\end{proof}
	

	\subsection{Bounding Voltage Ratios}
	Having Lemma~\ref{ost1} and Lemma~\ref{ost2} at our disposal, we want to use these results to prove Theorem~\ref{mainthm}. For this purpose, we estimate $\alpha^*_{n,t}$ and $\beta^*_{n,t}$ by deriving an upper bound for $\left|(1-v_{n,t+1}(x))/(1-v_{n,t}(x))-1\right|$. We begin with the following expression for $|v_{n,t}(x)-v_{n,t+1}(x)|.$
	
	\begin{lemma}{\label{diffes}}
		For any $n\geq 1,$ $t\geq 0,$ and $x\in V_{(n-1)}\setminus\{a\}$ we have
		\begin{align*}
			v_{n,t}(x)-v_{n,t+1}(x) &= \frac{1}{\mathcal{R}_{n,t}}
			\sum_{e=\{y,z\}\in E_{(n)}}(R_{t}(e)-R_{t+1}(e))\cdot i_{x,\{a\}\cup\partial V_{(n)}}^{n,t+1}(y,z)\cdot i_{a,\partial V_{(n)}}^{n,t}(y,z)
		\end{align*}
		where $\mathcal{R}_{n,t}$ is the effective resistance between $a$ and $\partial V_{(n)}$ in $(G_{(n)},C_t)$. Also, $i^{n,t}_{y,S}$ is the unit current in $(G_{(n)},C_t)$ when $y$ is the source and $S\subseteq V_{(n)}\setminus\{y\}$ is kept grounded; $i^{n,t+1}$ is also defined in a similar manner. 
	\end{lemma}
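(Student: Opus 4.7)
The plan is to use a discrete integration-by-parts identity for currents on $(G_{(n)}, C)$, applied twice. First, I would establish the identity: for any unit $y_0 \to S$ current $j$ in $(G_{(n)}, C)$ and any function $\phi : V_{(n)} \to \mathbb{R}$,
\begin{equation*}
\sum_{e=\{y,z\} \in E_{(n)}} j(y,z) \, [\phi(y) - \phi(z)] = \phi(y_0) - \sum_{s \in S} h_s(y_0) \, \phi(s),
\end{equation*}
where $h_s(y_0)$ is the probability that the weighted walk on $(G_{(n)}, C)$ started at $y_0$ first hits $S$ at $s$. This follows by regrouping the sum vertex-by-vertex and invoking the divergence properties of a unit current from Result~\ref{res2}: $\sum_{z} j(y_0, z) = 1$, $\sum_{z} j(s, z) = -h_s(y_0)$ for $s \in S$, and $\sum_z j(v, z) = 0$ for all other $v$.

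Next, I would apply this identity with $j = i^{n,t+1}_{x, \{a\} \cup \partial V_{(n)}}$ and $\phi = v_{n,t}$. Since $v_{n,t}(a) = 1$, $v_{n,t} \equiv 0$ on $\partial V_{(n)}$, and the hitting probability of $a$ from $x$ in the $C_{t+1}$-network equals $v_{n,t+1}(x)$ by Result~\ref{res1}, the right-hand side collapses to $v_{n,t}(x) - v_{n,t+1}(x)$. For the left-hand side, I would use that $v_{n,t}$ is precisely the rescaling of the unit current $i^{n,t}_{a,\partial V_{(n)}}$ producing a unit voltage drop between $a$ and $\partial V_{(n)}$, so by Ohm's law $v_{n,t}(y) - v_{n,t}(z) = R_t(e) \, i^{n,t}_{a,\partial V_{(n)}}(y,z) / \mathcal{R}_{n,t}$. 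Substituting yields the stated formula, but with $R_t(e)$ appearing in place of $R_t(e) - R_{t+1}(e)$.

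The key remaining step, which I expect to be the main obstacle, is to show that replacing $R_t(e)$ by $R_{t+1}(e)$ in that same sum yields zero; the desired identity then follows by subtraction. For this, I would let $U$ be the voltage function associated with the current $j = i^{n,t+1}_{x, \{a\} \cup \partial V_{(n)}}$ in $(G_{(n)}, C_{t+1})$, normalized so that $U \equiv 0$ on $\{a\} \cup \partial V_{(n)}$; Ohm's law then gives $R_{t+1}(e) \, j(y,z) = U(y) - U(z)$. Applying the integration-by-parts identity once more, this time with the \emph{other} current $j' = i^{n,t}_{a,\partial V_{(n)}}$ and $\phi = U$, yields
\begin{equation*}
\sum_{e=\{y,z\}} R_{t+1}(e) \, i^{n,t+1}_{x,\{a\}\cup\partial V_{(n)}}(y,z) \, i^{n,t}_{a,\partial V_{(n)}}(y,z) = U(a) - \sum_{s \in \partial V_{(n)}} h_s(a) \, U(s) = 0,
\end{equation*}
because $U$ vanishes on $\{a\} \cup \partial V_{(n)}$. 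The subtlety to handle carefully is that the two currents $i^{n,t+1}_{x, \{a\} \cup \partial V_{(n)}}$ and $i^{n,t}_{a,\partial V_{(n)}}$ live in different networks (conductances $C_{t+1}$ vs.\ $C_t$); what makes the cross-network pairing work is that the integration-by-parts identity itself is purely combinatorial, depending only on the divergence structure of a flow and not on the resistances, so it applies uniformly across both configurations.
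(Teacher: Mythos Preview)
Your proposal is correct and follows essentially the same route as the paper's proof: both arguments pair the unit current $i_1 = i^{n,t+1}_{x,\{a\}\cup\partial V_{(n)}}$ against $v_{n,t}$ via summation by parts (using the divergence structure of $i_1$ and the probabilistic interpretation of the current at sink vertices) to obtain the $R_t$-version of the identity, and then pair $i_0 = i^{n,t}_{a,\partial V_{(n)}}$ against the potential $U$ of $i_1$ to show the $R_{t+1}$-sum vanishes. Your packaging via a single reusable integration-by-parts identity is a slightly cleaner abstraction of what the paper does inline, but the underlying steps and insights are the same.
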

	
	\begin{proof}
		Note that all random variables in the claim are determined given $C_t$ and $C_{t+1}.$ The key idea is to represent $v_{n,t+1}(x)$ in terms of the current $i_1:=i_{x,\left\{a\right\}\cup\partial V_{(n)}}^{n,t+1}.$ Namely, we claim that 
		\begin{align}
		1-	v_{n,t+1}(x) = \sum_{z\in\partial V_{(n)}}\sum_{y\in V_{(n)}}i_1(y,z).
			\label{eq:identity}
		\end{align}
		In words, the right-hand side 
		of \eqref{eq:identity} is the total amount of current in $i_1$ that flows into $\partial V_{(n)}.$ Recall that the probabilistic interpretation of $i_1(y,z)$ as mentioned in Result~\ref{res2}. 
		Namely, we perform a weighted random walk on $(G_{(n)},C_{t+1})$ starting from $x$ and terminate it once it reaches $a$ or $\partial V_{(n)}$. By Result~\ref{res2}, $i_1(y,z)$ equals the expected net number of crossings of $\left\{y,z\right\}$ in the direction from $y$ to $z$  by the walk. In particular, it is zero if $y\nsim z.$ Taking $y\sim z$ as specified in the summation above, if $y\in \left\{a\right\}\cup\partial V_{(n)}$ we also have $i_1(y,z)=0$ as $\{y,z\}$ is never crossed. On the other hand, if $y\in V_{(n-1)}\setminus\{a\}$, we can cross $\{y,z\}$ exactly once in the direction $(y,z)$ as the walk will terminate after the crossing and we can't cross $\left\{y,z\right\}$ in the direction $(z,y)$; therefore,  $i_1(y,z)$ equals the probability that the walk terminates after crossing $\{y,z\}.$ Since, the walk can cross only one edge of the form $\left\{y,z\right\}$ with $z \in \partial V_{(n)}$ and $y \in V_{(n-1)}\setminus \left\{a\right\},$ we can conclude that  the right-hand side of \eqref{eq:identity} is simply the probability that the weighted random walk on $(G_{(n)},C_{t+1})$ beginning at $x$ will terminate after hitting $\partial V_{(n)}$, i.e., it will hit $\partial V_{(n)}$ before reaching $a.$ By the probabilistic interpretation of voltage, this is exactly $1-v_{n,t+1}(x).$ 
		
		The rest of our proof is routine algebra of flows, which we explain below. First, using the definition of flows,  we observe that $\sum_{y \in V_{(n)}} i_1(y,z) = 0$ for all $z \in V_{(n-1)} \setminus \left\{a,x\right\}$, whereas $\sum_{y \in V_{(n)}} i_1(y,x) = -1.$ Thus $(1-v_{n,t}(z)) \sum_{y \in V_{(n)}} i_1(y,z) = 0$, for all $z \in V_{(n-1)} \setminus \left\{x\right\}$. We can, therefore, extend \eqref{eq:identity} to get
		\begin{align*}
			v_{n,t}(x)-v_{n,t+1}(x)&= (1-v_{n,t+1}(x)) - (1-v_{n,t}(x)) \\
			& =  \sum_{z\in\partial V_{(n)}}\sum_{y\in V_{(n)}}i_1(y,z) + (1-v_{n,t}(x)) \sum_{y \in V_{(n)}} i_1(y,x) \\
			&= \sum_{z\in\partial V_{(n)}}(1-v_{n,t}(z))\sum_{y\in V_{(n)}}i_1(y,z) + (1-v_{n,t}(x)) \sum_{y \in V_{(n)}} i_1(y,x)\\
			& = \sum_{z\in\partial V_{(n)}\cup \left\{x\right\}}(1-v_{n,t}(z))\sum_{y\in V_{(n)}}i_1(y,z) \\
			&= \sum_{z\in V_{(n)}}(1-v_{n,t}(z))\sum_{y\in V_{(n)}}i_{1}(y,z).
		\end{align*}
		As current is antisymmetric, we further obtain
		\begin{align*}
			v_{n,t}(x)-v_{n,t+1}(x)&=\frac{1}{2}\sum_{y,z\in V_{(n)}}(v_{n,t}(y)-v_{n,t}(z))\cdot i_1(y,z)\\
			&= \frac{1}{\mathcal{R}_{n,t}}\sum_{e=\{y,z\}\in E_{(n)}}R_{t}(e)\cdot i_0(y,z)\cdot i_1(y,z),
		\end{align*}
		where $i_0:=i_{a,\partial V_{(n)}}^{n,t}$ and the second equality follows from \textit{Ohm's law}. 
		
		To conclude, it suffices to show that
		\begin{align*}
			L:=\sum_{e=\{y,z\}\in E_{(n)}}R_{t+1}(e)\cdot i_{1}(y,z)\cdot i_{0}(y,z)=0.
		\end{align*}
		We evaluate $L$ by essentially reversing the above process. Let $\phi(x)$ denote the voltage at $y \in V_{(n)}$ induced by $i_1$. Then, by \textit{Ohm's law} we have
		\begin{align*}
			L&=\sum_{e=\{y,z\}\in E_{(n)}}(\phi(y)-\phi(z))\cdot i_{0}(y,z)\\
			&=\frac{1}{2}\sum_{y,z\in V_{(n)}}(\phi(y)-\phi(z))\cdot i_{0}(y,z)\\
			&=\sum_{y\in V_{(n)}}\phi(y)\sum_{z\in V_{(n)}}i_{0}(y,z),
		\end{align*}
		where the second and third equalities follow since current is antisymmetric. Again we have, $\sum_{z \in V_{(n)}} i_0(y,z) = 0$ for any $y \in V_{(n-1)} \setminus \left\{a\right\}$, whereas $\phi(y)=0$ for $y \in \left\{a\right\} \cup \partial V_{(n)}$ (by definition of $\phi$ and $i_1$). This shows that $L=0$ and completes the proof.
	
	\end{proof}
	
	Our next target is to compute an appropriate lower bound for $(1-v_{n,t}(x))$ for any $x \neq a$ based on which connected component (after removing $a$) $x$ is a member of. Again, we are interested in components of infinite cardinality.
 
     Let $\left\{\widetilde{G}_k:=(\widetilde{V}_k,\widetilde{E}_k) \; : \; 1 \leq k \leq m\right\}$ be the infinite connected components of the graph $G$ after removing vertex $a$. For any $1 \leq k \leq m$ and distinct $y_1, y_2 \in \widetilde{V}_k \cap \partial V_{(1)}$, let $\Gamma_{y_1,y_2}$ be a path in $\widetilde{G}_k$ connecting $y_1$ and $y_2$. Set 
	$$ D_k := \inf \left\{n \geq 1 : \Gamma_{y_1,y_2} \cap \partial V_{(n)}= \emptyset, \; \; \forall \; y_1 \neq y_2 \in \widetilde{V}_k \cap \partial V_{(1)}\right\} < \infty, \; \; D_{\text{max}}:= \max_{1 \leq k \leq m} D_k.$$
	In other words, the path $\Gamma_{y_1,y_2}$ lies completely in $V_{(D_{\text{max}}-1)}$ for any $y_1,y_2 \in \widetilde{V}_k \cap \partial V_{(1)}$. Finally, let $G^*_k= (V_k^*,E_k^*)$ be the subgraph induced by $V_k^*:=\widetilde{V}_k \cup \left\{a\right\}$. Define, $\mathcal{R}_{n,t,k}$ to be the effective resistance between the vertex $a$ and $V_k^* \cap \partial V_{(n)}= \widetilde{V}_k \cap \partial V_{(n)}$ when the conductance configuration is given by $C_t\big\rvert_{E_k^* \cap E_{(n)}}$. 
	
	\begin{lemma}{\label{voltlower}}
		Fix a vertex $x \in V$ such that $x \neq a$. Suppose that $x \in \widetilde{V}_k$ for some $k \in \left\{1, \ldots, m\right\}$. Then for any $t \geq 0$ and $n \geq D_{\text{max}}$, we have 
		$$(1-v_{n,t}(x)) \geq \min_{y \in \widetilde{V}_k \cap \partial V_{(1)}} (1-v_{n,t}(y)) \geq \dfrac{\Pi_{t,D_{\text{max}}}\cdot\inf_{e \in E_{(1)}} R_t(e)}{|\partial V_{(1)}|\mathcal{R}_{n,t,k}},$$
		where 
		$$ \Pi_{t,\ell}:= \dfrac{\inf_{e \in E_{(\ell)}} C_t(e)}{\sup_{z \in V_{(\ell)}} \operatorname{deg}(z)\cdot  \sup_{e \in E_{(\ell)}} C_t(e)}, \; \forall \; \ell \geq 1.$$
	\end{lemma}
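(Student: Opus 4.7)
The lemma asserts two inequalities, which I would handle separately; the first is essentially a strong Markov argument, while the second combines electrical network theory with a path-comparison step. For the first inequality, the key observation is that removing $a$ disconnects $\widetilde{V}_k$ from $V \setminus V_k^*$, so any walk starting at $x \in \widetilde{V}_k$ remains in $\widetilde{V}_k$ until it visits $a$ and, in particular, must first pass through $\widetilde{V}_k \cap \partial V_{(1)}$, the neighbours of $a$ lying in $\widetilde{V}_k$. Setting $\sigma := \inf\{s \geq 0 : X_s \in (\widetilde{V}_k \cap \partial V_{(1)}) \cup \partial V_{(n)}\}$ (finite almost surely since $V_{(n)}$ is finite), the strong Markov property yields
$$1 - v_{n,t}(x) = \P_x(X_\sigma \in \partial V_{(n)}) + \sum_{y \in \widetilde{V}_k \cap \partial V_{(1)}} \P_x(X_\sigma = y)(1 - v_{n,t}(y)).$$
Since $\min_y(1-v_{n,t}(y)) \leq 1$ and the probabilities on the right sum to one, the right-hand side dominates $\min_y(1-v_{n,t}(y))$, as required. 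The boundary cases $x \in \partial V_{(n)}$ or $x \notin V_{(n)}$ are trivial since then $v_{n,t}(x) = 0$.

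For the second inequality, I would rely on the electrical machinery of Section~\ref{electrical}. Let $i_0 := i^{n,t}_{a,\partial V_{(n)}}$ denote the unit current in $(G_{(n)}, C_t)$. Its induced voltage is $\mathcal{R}_{n,t}\cdot v_{n,t}$, so Ohm's law gives $1 - v_{n,t}(y) = i_0(a,y) R_t(a,y)/\mathcal{R}_{n,t}$ for each $y \sim a$. Decomposing the network around $a$ into parallel resistors corresponding to the components $\widetilde{V}_1, \dots, \widetilde{V}_m$, the fraction of current entering $\widetilde{V}_k$ equals
$$\sum_{y \in \widetilde{V}_k \cap \partial V_{(1)}} i_0(a,y) = \frac{\mathcal{R}_{n,t}}{\mathcal{R}_{n,t,k}}.$$
A pigeonhole argument then produces a distinguished $y^\star \in \widetilde{V}_k \cap \partial V_{(1)}$ with $i_0(a, y^\star) \geq (\mathcal{R}_{n,t}/\mathcal{R}_{n,t,k})/|\partial V_{(1)}|$, and hence
$$1 - v_{n,t}(y^\star) \geq \frac{R_t(a,y^\star)}{|\partial V_{(1)}|\,\mathcal{R}_{n,t,k}} \geq \frac{\inf_{e \in E_{(1)}} R_t(e)}{|\partial V_{(1)}|\,\mathcal{R}_{n,t,k}}.$$

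To propagate this lower bound from $y^\star$ to an arbitrary $y \in \widetilde{V}_k \cap \partial V_{(1)}$, I would apply the strong Markov property together with the pre-selected path $\Gamma_{y,y^\star}$, which by definition of $D_{\max}$ lies entirely in $\widetilde{V}_k \cap V_{(D_{\max}-1)}$. Formally,
$$1 - v_{n,t}(y) \geq \P_y\bigl(\text{the walk traces } \Gamma_{y,y^\star} \text{ before visiting } a \text{ or } \partial V_{(n)}\bigr)\cdot(1 - v_{n,t}(y^\star)),$$
and at every intermediate vertex $z \in V_{(D_{\max})}$ along the path the probability of choosing the correct outgoing edge is bounded below by $\Pi_{t, D_{\max}}$ directly from its definition, since the ratio $C_t(z,z')/C_t(z)$ exceeds $\inf_{E_{(D_{\max})}} C_t \big/(\deg(z) \sup_{E_{(D_{\max})}} C_t)$. \emph{The main obstacle} is to bundle these per-step bounds into the single factor $\Pi_{t, D_{\max}}$ appearing in the stated inequality: a naive iteration along a path of length $L$ yields only $\Pi_{t, D_{\max}}^L$, so matching the claim requires either absorbing the uniform path-length bound $\leq D_{\max}$ into the quantity $\Pi_{t, D_{\max}}$, or a refined choice of $y^\star$ (for instance one adjacent to $y$ after a further reduction) that shortens the traversal to a single step.
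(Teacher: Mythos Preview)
Your approach is essentially the paper's own. The first inequality is obtained in the paper via Lemma~\ref{flowdirec}, which is precisely the strong-Markov/maximum-principle argument you give. For the electrical step you work on the full network $(G_{(n)},C_t)$ and invoke the parallel decomposition $\sum_{y\in\widetilde V_k\cap\partial V_{(1)}} i_0(a,y)=\mathcal R_{n,t}/\mathcal R_{n,t,k}$, whereas the paper works directly on the restricted network $(V_k^*\cap V_{(n)},C_t|_{E_k^*})$ with unit current $i_{n,t,k}$ and uses $\sum_y i_{n,t,k}(a,y)=1$; the two routes are equivalent and yield the same bound $\max_y(1-v_{n,t}(y))\ge \inf_{e\in E_{(1)}}R_t(e)\big/\bigl(|\partial V_{(1)}|\,\mathcal R_{n,t,k}\bigr)$. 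The path-comparison step you describe is exactly what the paper does.

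The obstacle you isolate at the end is genuine, and the paper does \emph{not} resolve it: it simply writes
\[
\prod_{\ell'=0}^{\ell-1}\frac{C_t(z_{\ell'},z_{\ell'+1})}{\sum_{z'\sim z_{\ell'}}C_t(z_{\ell'},z')}\ \ge\ \frac{\inf_{e\in E_{(D_{\max})}}C_t(e)}{\sup_{z'\in V_{(D_{\max})}}\deg(z')\cdot\sup_{e\in E_{(D_{\max})}}C_t(e)}=\Pi_{t,D_{\max}},
\]
which, as you point out, the argument only justifies with the right-hand side replaced by $\Pi_{t,D_{\max}}^{\ell}$. So the lemma as stated is not established by the paper's proof either; your diagnosis is correct, and neither of your suggested workarounds (absorbing a path-length power, or shortening to a single step) is actually carried out there. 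The honest fix is to replace $\Pi_{t,D_{\max}}$ by $\Pi_{t,D_{\max}}^{L}$, where $L$ is the maximal length among the finitely many chosen paths $\Gamma_{y_1,y_2}$; this depends only on the graph and on $a$. Nothing downstream is affected: Corollary~\ref{maincor} and the proof of Theorem~\ref{mainthm} only use that this prefactor is a strictly positive $\mathscr F_t$-measurable function of $C_t|_{E_{(D_{\max})}}$ which, by boundedness of the RWCE, is bounded away from $0$ uniformly in $t$---and $\Pi_{t,D_{\max}}^{L}$ has exactly that property.
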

	 
	 \begin{proof}
	 	Since any path from $x$ to $a$ must pass through $\widetilde{V}_k \cap \partial V_{(1)}$, by Lemma~\ref{flowdirec} we have $$1- v_{n,t}(x) \geq \min_{y \in \widetilde{V}_k \cap \partial V_{(1)}} (1-v_{n,t}(y)).$$ Fix $y \neq z \in \widetilde{V}_k \cap \partial V_{(1)}$ and consider any path $\Gamma_{y,z}=(z_0, z_1, \ldots, z_{\ell})$ connecting $y=z_0$ to $z=z_{\ell}$ which lies completely inside $V_{(D_{\text{max}}-1)}$ and avoids $a$.  Since $(1-v_{n,t}(y))$ (resp.~$(1-v_{n,t}(z))$) is the probability that the weighted random walk on $(G_{(n)},C_t)$, starting from $y$ (resp.~$z$), will hit $\partial V_{(n)}$ before visiting $a$, we can write the following for any $n \geq D_{\text{max}}$.
	 	\begin{align*}
	 		(1-v_{n,t}(y)) \geq (1-v_{n,t}(z)) \prod_{\ell^{\prime}=0}^{\ell-1} \dfrac{C_t(z_{\ell^{\prime}},z_{\ell^{\prime}+1})}{\sum_{z^{\prime} : z^{\prime} \sim z_{\ell^{\prime}}} C_t(z_{\ell^{\prime}},z^{\prime})} &\geq (1-v_{n,t}(z)) \dfrac{\inf_{e \in E_{(D_{\text{max}})}} C_t(e)}{\sup_{z^{\prime} \in V_{(D_{\text{max}})}} \operatorname{deg}(z^{\prime}) \sup_{e \in E_{(D_{\text{max}})}} C_t(e)},
	 	\end{align*} 
	 	and thus
	 	$$ \min_{y \in \widetilde{V}_k \cap \partial V_{(1)}} (1-v_{n,t}(y)) \geq \Pi_{t,D_{\text{max}}} \max_{y \in \widetilde{V}_k \cap \partial V_{(1)}} (1-v_{n,t}(y)).$$
	 	Now let $i_{n,t,k}$ be the unit current on $\left(V^*_k \cap V_{(n)}, C_t \big \rvert_{E^*_k \cap E_{(n)}}\right)$ with $a$ as the source and $V^*_k \cap \partial V_{(n)}$ as the sink. Then, $\mathcal{R}_{n,t,k}(1-v_{n,t}(y)) = i_{n,t,k}(a,y) R_t(a,y)$ for any $y \in V^*_k \cap \partial V_{(1)}$. This holds true since $v_{n,t}(y)$ equals the voltage at $y$ in this new electrical network, as argued before Lemma~\ref{supersub}. Since $i_{n,t,k}(a,y) \geq 0$ for $y \in V^*_k \cap \partial V_{(1)}$ and $\sum_{y \in V^*_k \cap \partial V_{(1)}} i_{n,t,k}(a,y)=1$, we have 
	 	\begin{align*}
	 		|\partial V_{(1)}| \max_{y \in \widetilde{V}_k \cap \partial V_{(1)}} (1-v_{n,t}(y)) \geq \sum_{y \in V^*_k \cap \partial V_{(1)}} (1-v_{n,t}(y)) \geq \sum_{y \in V^*_k \cap \partial V_{(1)}} \dfrac{i_{n,t,k}(a,y) R_t(a,y)}{\mathcal{R}_{n,t,k}} \geq \dfrac{\inf_{e \in E_{(1)}} R_t(e)}{\mathcal{R}_{n,t,k}}.
	 	\end{align*}
	 	Hence,
	 	$$(1-v_{n,t}(x)) \geq \min_{y \in \widetilde{V}_k \cap \partial V_{(1)}} (1-v_{n,t}(y)) \geq \dfrac{ \Pi_{t,D_{\text{max}}}\cdot \inf_{e \in E_{(1)}} R_t(e)}{|\partial V_{(1)}|\mathcal{R}_{n,t,k}}.$$
	 \end{proof}
	
	\begin{corollary}{\label{maincor}}
		Fix a vertex $x \in V$ such that $x \neq a$. Suppose that $x \in \widetilde{V}_k$ for some $k \in \left\{1, \ldots, m\right\}$. Then for any $t \geq 0$ and $n \geq D_{\text{max}}$, we have 
	
		\begin{align*}
			\left|\frac{1-v_{n,t+1}(x)}{1-v_{n,t}(x)}-1\right|\leq \dfrac{|\partial V_{(1)}| \sup_{e \in E_{(1)}} C_t(e)}{\Pi_{t,D_{\text{max}}}}\sum_{e\in E_k^*}|R_t(e)-R_{t+1}(e)|.
		\end{align*}
	\end{corollary}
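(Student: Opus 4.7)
The target ratio can be rewritten as $\frac{|v_{n,t}(x)-v_{n,t+1}(x)|}{1-v_{n,t}(x)}$, and the plan is to bound the numerator using Lemma~\ref{diffes} and the denominator using Lemma~\ref{voltlower}. The denominator provides a factor of $1/\mathcal{R}_{n,t,k}$, which would ruin the final estimate unless the numerator contributes a matching factor; securing this cancellation is the crux of the argument.

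Lemma~\ref{diffes} expresses the numerator as a weighted sum over $e=\{y,z\}\in E_{(n)}$ involving the two currents $i_1:=i^{n,t+1}_{x,\{a\}\cup\partial V_{(n)}}$ and $i_0:=i^{n,t}_{a,\partial V_{(n)}}$. A first simplification comes from the fact that $\widetilde{V}_k$ is separated from the rest of $G$ only through $a$: a walk from $x\in\widetilde{V}_k$ cannot cross into any other component $\widetilde{V}_{k'}$ without first being absorbed at $a$, so by the probabilistic interpretation in Result~\ref{res2}, $i_1(y,z)=0$ outside $E_k^*\cap E_{(n)}$. The sum therefore reduces to edges in $E_k^*$, and on those edges we trivially have $|i_1|\le 1$.

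The key refinement is to sharpen the naive bound $|i_0|\le 1$ on $E_k^*$: the restriction of $i_0$ to $E_k^*$ is itself a flow in $G_k^*$ from $a$ to $V_k^*\cap\partial V_{(n)}$ that still obeys Kirchhoff's potential law, so by uniqueness it is proportional to the unit current $i_{n,t,k}$. The proportionality constant equals the total current out of $a$ into component $k$, which by Ohm's law is the common voltage drop $\mathcal{R}_{n,t}$ divided by $\mathcal{R}_{n,t,k}$ (so in particular $1/\mathcal{R}_{n,t}=\sum_k 1/\mathcal{R}_{n,t,k}$, the parallel-composition identity for the components meeting only at $a$ and the merged boundary). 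This yields $|i_0(e)|\le\mathcal{R}_{n,t}/\mathcal{R}_{n,t,k}$ on $E_k^*$, and combined with the $1/\mathcal{R}_{n,t}$ prefactor in Lemma~\ref{diffes} gives
$$|v_{n,t}(x)-v_{n,t+1}(x)|\le \frac{1}{\mathcal{R}_{n,t,k}}\sum_{e\in E_k^*}|R_t(e)-R_{t+1}(e)|.$$

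Dividing by the lower bound for $1-v_{n,t}(x)$ from Lemma~\ref{voltlower} and rewriting $1/\inf_{e\in E_{(1)}}R_t(e)=\sup_{e\in E_{(1)}}C_t(e)$, the $\mathcal{R}_{n,t,k}$ factors cancel to give exactly the stated inequality. The main obstacle is recognizing that the apparently natural estimate $|i_0|\le 1$ is too crude by a factor of $\mathcal{R}_{n,t,k}/\mathcal{R}_{n,t}$ (which is $\ge 1$ and need not be bounded); it is the parallel-decomposition identity among the components $\{G_k^*\}$ that restores the proper scale.
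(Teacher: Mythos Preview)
Your argument is correct, but the paper takes a cleaner shortcut that avoids your parallel-decomposition step entirely. Rather than applying Lemma~\ref{diffes} on the full graph $G$ and then localising the two currents, the paper applies Lemma~\ref{diffes} directly on the subgraph $G_k^*=(V_k^*,E_k^*)$ with source $a$ and sink $V_k^*\cap\partial V_{(n)}$. The crucial input (established just before Lemma~\ref{supersub}) is that the voltages on $V_k^*$ coincide with those in $G$, i.e.\ $v_{n,t}^*(x)=v_{n,t}(x)$; hence the left-hand side of Lemma~\ref{diffes} is unchanged, while on the right-hand side the effective resistance is already $\mathcal{R}_{n,t,k}$ and both currents are unit currents in $G_k^*$ (so bounded by $1$). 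This yields the key estimate
\[
|v_{n,t}(x)-v_{n,t+1}(x)|\le\frac{1}{\mathcal{R}_{n,t,k}}\sum_{e\in E_k^*}|R_t(e)-R_{t+1}(e)|
\]
in one line, without needing to argue that $i_0|_{E_k^*}=(\mathcal{R}_{n,t}/\mathcal{R}_{n,t,k})\,i_{n,t,k}$. Your route works and has the merit of making the parallel structure among the $\{G_k^*\}$ explicit, but the paper's route is shorter because the localisation is done upstream by choosing the right graph from the start. One minor omission in your write-up: you should note (as the paper does) that the inequality is trivial when $x\notin V_{(n-1)}$, where Lemma~\ref{diffes} does not directly apply.
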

	\begin{proof}
		We apply Lemma~\ref{diffes} for the graph $G^*_k=(V^*_k,E^*_k)$ with $a$ as the source/origin and $V^*_k \cap \partial V_{(n)}$ as the sink. Recall that $v_{n,t}(x)$ still equals the voltage at $x$ in this new network when conductance configuration is given by $C_t \big \rvert_{E^*_k}$.  
		Since the right-hand side of Lemma~\ref{diffes} involves unit currents, taking absolute values gives the following for any $x \in V^*_k \cap V_{(n-1)} , x \neq a$ :
		 $$|v_{n,t}(x)-v_{n,t+1}(x)|\leq \frac{1}{\mathcal{R}_{n,t,k}}\sum_{e\in E^*_k}|R_t(e)-R_{t+1}(e)|.$$
		Moreover, the inequality trivially holds for $x\in V^*_k \setminus V_{(n-1)}$ or $x=a$. Combining this with Lemma~\ref{voltlower} gives our desired result.
	\end{proof}
	
	\subsection{Proof of Theorem~\ref{mainthm}}
	We are now ready to prove Theorem~\ref{mainthm}. We shall continue using the notations introduced before the statement of Lemma~\ref{voltlower}.
	
	\begin{proof}[Proof of Theorem~\ref{mainthm} (Recurrent case)]
	
	Since the RWCE is proper, bounded from above and satisfies the condition in~(\ref{condition}), it is also bounded from below (see Remark~\ref{below}) and hence necessarily elliptic. Moreover, the hypotheses on the RWCE remain true for any sub-process $\left\{\l X_t, G_t \r : t \geq T\right\}$ where $T \geq 0$ (recall that the weighted random walk on $(G,C_t)$ is recurrent for all $t \geq 0$, see \Cref{boundbelow}). Thus, it is enough to show that 
	$$ \P[X_t =a \text{ for some } t \geq 0] =1.$$
 
    If $X_0\in V'$ where $V'$ is a finite connected component after removing $a,$ the claim easily follows from ellipticity of the RWCE. Thus, let $K \in \left\{1, \ldots, m\right\}$ be the smallest positive integer $k$ such that $X_0 \in V_k^*$.  Our aim is to use Lemma~\ref{ost1} with $V^*=V_K^*$; note that $K$ is $\mathscr{F}_0$-measurable. First, we check that for any $t\geq 0$ and $x \in V$, we have $v_{n,t}(x)\to 1$ almost surely as $n\to \infty$. Let $d(a,x)=\ell$ and $(x_0,\dots, x_\ell)$ be a path from $a$ to $x,$ with $x_0=a, x_{\ell}=x$. Let $i^{n,t}$ be the unit current on $(G_{(n)},C_t)$ with $a$ as the source and $\partial V_{(n)}$ as the sink.  Then, for $n>\ell$ we have
		\begin{align*}
			1-v_{n,t}(x) = v_{n,t}(a)-v_{n,t}(x)&= \frac{1}{\mathcal{R}_{n,t}}\sum_{k=0}^{\ell-1}i^{n,t}(x_k,x_{k+1})R_t(x_k,x_{k+1})
			\leq \frac{1}{\mathcal{R}_{n,t}}\sum_{k=0}^{\ell-1}R_t(x_k,x_{k+1}).
		\end{align*}
		Since, the RWCE is proper, it is enough to show that $\mathcal{R}_{n,t} \to \infty$ as $n \to \infty$ almost surely. On the otherhand, using the computation carried out in the proof of Lemma~\ref{resistchange}, we have
		$$ |\mathcal{R}_{n,t} - \mathcal{R}_{n,0} | \leq \sum_{e \in E} |R_t(e)-R_0(e)| \leq \sum_{s=0}^{t-1} \sum_{e \in E} |R_{s}(e)-R_{s+1}(e)| \leq \Gamma < \infty,$$
		almost surely by the hypothesis of \Cref{mainthm}, whereas $\mathcal{R}_{n,0} \to \infty$ almost surely (since the weighted random walk on $(G,C_0)$ is recurrent). This shows that $v_{n,t}(x) \to 1$ almost surely as $n \to \infty$.  
		
		Next, we show that the condition on $\alpha^*_{n,t}$ holds true with $V^*=V_K^*$. 
	For any $\ell \geq 1$ and $t \geq 0$, 
		$$ \Pi_{t,\ell} \geq \dfrac{\inf_{e \in E_{(\ell)}} \inf_{s \geq 0} C_s(e)}{\sup_{z \in V_{(\ell)}} \operatorname{deg}(z) \sup_{e \in E_{(\ell)}} \sup_{s \geq 0} C_s(e)}.$$
		Therefore, by Corollary~\ref{maincor}, we get 
		\begin{align*}
		\sup_{n \geq D_{\text{max}}}	\prod_{t=0}^\infty \alpha^*_{n,t} &\leq \prod_{t=0}^\infty \left (1 +  \dfrac{|\partial V_{(1)}| \sup_{e \in E_{(1)}} C_t(e)}{\Pi_{t,D_{\text{max}}}}\sum_{e\in E_K^*}|R_t(e)-R_{t+1}(e)|\right) \\
		&	\leq \exp\left(\sum_{t} \dfrac{|\partial V_{(1)}| \sup_{e \in E_{(1)}} C_t(e)}{\Pi_{t,D_{\text{max}}}} \sum_{e \in E} |R_t(e)-R_{t+1}(e)|\right)\\
		& \leq \exp\left( \Gamma \cdot\dfrac{|\partial V_{(1)}| \cdot\sup_{z \in V_{(D_{\text{max}})}} \operatorname{deg}(z) \cdot \sup_{e \in E_{(1)}, t\geq 0} C_t(e) \cdot \sup_{e \in E_{(D_{\text{max}})}, t\geq 0} C_t(e)}{\inf_{e \in E_{(D_{\text{max}})}, t \geq 0} C_t(e)}\right)< \infty,
		\end{align*}
	almost surely. We now apply Lemma~\ref{ost1} to conclude that $\P[X_t=a \text{ for some } t \geq 0 \mid \mathscr{F}_0] =1$ almost surely on the event $(X_0 \in V^*_K)$. This completes the proof. 
		\end{proof}
  
	\begin{proof}[Proof of Theorem~\ref{mainthm}(Transient case)]
		We aim to use Lemma~\ref{ost2}. For large enough $n,$ note that no current flows in the finite components after removing $a$ when considering $\partial V_{(n)}$ as the sink. In other words, for large enough $n$, the effective resistances in these finite components are infinity and 
		thus, by parallel law of resistances, we have $ 1/\mathcal{R}_{n,t} = \sum_{k=1}^m 1/\mathcal{R}_{n,t,k}$. Taking $n \to \infty$, we have $ 1/\mathcal{R}_{\infty,t} = \sum_{k=1}^m 1/\mathcal{R}_{\infty,t,k},$ where $\mathcal{R}_{\infty,t,k}$ is the almost sure limit of $\mathcal{R}_{n,t,k}$ as $n \to \infty$. Since the weighted random walk on $(G,C_0)$ is transient, we have $\mathcal{R}_{\infty,0}< \infty$ almost surely and hence almost surely at least one $\mathcal{R}_{\infty,0,k}$ is finite. Let $K \in \left\{1,\ldots,m\right\}$ be the smallest positive integer $k$ such that $\mathcal{R}_{\infty,0,k}$ is finite. Note that both $K$ and $\mathcal{R}_{\infty,0,K}$ are $\mathscr{F}_0$-measurable.
		
			Recall from the proof of the recurrent case that, for any $t \geq 0$, 
		$$ \dfrac{|\partial V_{(1)}| \sup_{e \in E_{(1)}} C_t(e)}{\Pi_{t,D_{\text{max}}}} \leq \dfrac{|\partial V_{(1)}|\cdot  \sup_{z \in V_{(D_{\text{max}})}} \operatorname{deg}(z) \cdot \sup_{e \in E_{(1)}, s\geq 0} C_s(e) \cdot \sup_{e \in E_{(D_{\text{max}})}, s\geq 0} C_s(e)}{\inf_{e \in E_{(D_{\text{max}})}, s \geq 0} C_s(e)} =: \Gamma^* < \infty,$$
		almost surely. We want to apply Lemma~\ref{ost2} with $V^*= V_K^*$. Towards that end we define for any $t \geq 0$, 
		$$ \Gamma_t^* :=	\dfrac{|\partial V_{(1)}| \cdot\sup_{z \in V_{(D_{\text{max}})}} \operatorname{deg}(z) \cdot \sup_{e \in E_{(1)}} C_t(e) \cdot\sup_{e \in E_{(D_{\text{max}})}} C_t(e)}{\inf_{e \in E_{(D_{\text{max}})}} C_t(e)},$$
		and for any $t \geq 1$, $N \in \mathbb{N}$,
		 $$ \Gamma_t := \sum_{s=0}^{t-1} \sum_{e \in E} \big \rvert R_{s}(e)-R_{s+1}(e)\big \rvert, \;\; \Lambda_{N,t}:= \inf_{n \geq N} \prod_{s=0}^{t-1}\beta_{n,s}^*.$$
		 Note that $\Gamma_t^*, \Gamma_t, \Lambda_{N,t}$ are all $\mathscr{F}_t$-measurable. Moreover, $\sup_{t \geq 0} \Gamma^*_t \leq \Gamma^* < \infty $ and $\sup_{t \geq 1} \Gamma_t = \Gamma < \infty$ almost surely. Also, 
		 $$ \inf_{t \geq 1}  \Lambda_{N,t} =   \inf_{t \geq 1} \inf_{n \geq N} \prod_{s=0}^{t-1} \beta^*_{n,s} =   \inf_{n \geq N} \inf_{t \geq 1} \prod_{s=0}^{t-1} \beta^*_{n,s} = \inf_{n \geq N} \prod_{s \geq 0} \beta^*_{n,s}.$$   
		
		We start the proof by first considering the case where there exists $\delta >0$ (non-random) such that 
		$$ \inf_{n \geq D_{\text{max}}} \prod_{t \geq 0} \beta^*_{n,t} =\inf_{t \geq 1} \Lambda_{D_{\text{max}},t} \geq \delta,\;\; \text{ and } \max \left(\Gamma, \sup_{t \geq 1}\Gamma^*_t\right) \leq 1/\delta$$
		almost surely. Similar to the recurrent case, it is enough to show that $\P[X_t =a, \text{ finitely often}]=1$. Given $M \in (0,\infty)$ and $n \geq D_{\text{max}}$, note that on the event $A_M:=\left(\mathcal{R}_{\infty,0,K}, \Gamma_0^* \leq M \right) \in \mathscr{F}_0$ Lemma~\ref{voltlower} yields the following for any $t \geq 0$: 
		$$ 	 \inf_{ x \in V_K^* \setminus \left\{a\right\}} (1- v_{n,t}(x)) \geq   \dfrac{1}{\Gamma_t^* \mathcal{R}_{n,t,K}} \geq  \dfrac{1}{\Gamma_t^* \mathcal{R}_{\infty,t,K}}.$$
		Using the proof of Lemma~\ref{resistchange}, we have 
		$$ |\mathcal{R}_{n,t,K} - \mathcal{R}_{n,0,K} | \leq \sum_{e \in E_K^*} |R_t(e)-R_0(e)| \leq \sum_{s=0}^{t-1} \sum_{e \in E} |R_{s}(e)-R_{s+1}(e)| \leq \Gamma \leq 1/\delta,$$
		and thus $|\mathcal{R}_{\infty,t,K} - \mathcal{R}_{\infty,0,K} | \leq 1/\delta$. Therefore, on the event $A_M$, we have for any $t \geq 0$, 
		$$  \inf_{ x \in V_K^* \setminus \left\{a\right\}, n \geq D_{\text{max}}} (1- v_{n,t}(x)) \geq   \dfrac{1}{\Gamma_t^* \mathcal{R}_{\infty,t,K}} \geq \dfrac{1}{(M \vee \delta^{-1}) (M+\delta^{-1})} \geq (M+\delta^{-1})^{-2}.$$
		Applying Lemma~\ref{ost2}, we can arrive at the following conclusion : For any $T \geq 0$, 
		$$ \P[X_t \neq a, \; \forall \; t \geq T \mid \mathcal{F}_T] \geq \delta(M+1/\delta)^{-2}>0,$$
		almost surely on $A_M \cap(X_T \in V_K^*\setminus \{a\})$. \textit{Levy's upward theorem} now implies that almost surely, 
		\begin{align*}
			\mathbbm{1}_{(X_t =a \text{ finitely often})} &\geq \delta(M+1/\delta)^{-2} \limsup_{T \to \infty} \mathbbm{1}_{A_M \cap (X_T \in V_K^*\setminus \{a\})} \\
			& = \delta(M+1/\delta)^{-2} \mathbbm{1}_{A_M \cap (X_t \in V_K^*\setminus \{a\}\text{ infinitely often})} \\
			& \geq \delta(M+1/\delta)^{-2} \mathbbm{1}_{A_M \cap (X_t = a \text{ infinitely often})},
		\end{align*} 
		where the last inequality follows from ellipticity of the RWCE. Thus, $\P[X_t=a \text{ infinitely often}, A_M] =0$ for any $M \in (0,\infty)$. Taking $M \to \infty$ and recalling that $\mathcal{R}_{\infty, 0, K}, \Gamma_0^*< \infty$ almost surely, we arrive at 
	 $\P[X_t =a, \text{ finitely often}]=1$. 
		
		For the general case, we first set $\upsilon:= \inf \left\{ t \geq 0 :  \sum_{s \geq t} \sum_{e \in E} |R_{s}(e)-R_{s+1}(e)| \leq 1/(2\Gamma^*) \right\}$. By the hypothesis in~(\ref{condition}) and the fact that $\Gamma^* < \infty$ almost surely, we know that $\upsilon < \infty$ almost surely. 
		
		Note that, for any $t \geq \upsilon$, we have $\Gamma^* \sum_{e \in E} |R_t(e)-R_{t+1}(e)|  \leq 1/2$, almost surely. Since, $\log(1-x)+2x \geq 0$ for $x \in [0,1/2]$, Corollary~\ref{maincor} yields the following: 
		\begin{align*}
			\inf_{n \geq D_{\text{max}}} \prod_{t \geq  \upsilon} \beta^*_{n,t} \geq   \prod_{t \geq  \upsilon} \left( 1- \Gamma_t^* \sum_{e \in E} \big \rvert R_t(e)-R_{t+1}(e) \big \rvert\right)_+  &\geq  \prod_{t \geq  \upsilon} \exp\left( -2\Gamma^* \sum_{e \in E} \big \rvert R_t(e)-R_{t+1}(e) \big \rvert\right) \\
			&\geq \exp(-2\Gamma^*\Gamma) >0,
		\end{align*} 
		almost surely, where $t_+:=t \lor 0$. 
		On the other hand, 
		\begin{align}
			\beta^*_{n,t}\geq \inf_{m \geq D_{\text{max}}, s \geq 0, x \in V_K^* \setminus \left\{a\right\}} (1- v_{m,s}(x)) \geq   \dfrac{1}{\Gamma^* \mathcal{R}_{\infty,t,K}} \geq \dfrac{1}{\Gamma^*(\mathcal{R}_{\infty,0,K}+\Gamma)},
		\end{align} 
		and thus 
		$$ \inf_{n \geq D_{\text{max}}} \prod_{t < \upsilon} \beta^*_{n,t} \geq \left( \dfrac{1}{\Gamma^*(\mathcal{R}_{\infty,0,K}+\Gamma)}\right)^{\upsilon} >0,$$
		almost surely. Combining the above estimates, we conclude that $\inf_{t \geq 1} \Lambda_{D_{\text{max}},t} = \inf_{n \geq D_{\text{max}}} \prod_{t \geq 0} \beta_{n,t}^* >0$ almost surely. Now we define a sequence of $\mathscr{F}_t$-stopping times :
	$$ \gamma_m := \inf \left\{ t \geq 1 :  \Lambda_{D_{\text{max}},t} \leq 1/m \text{ or } \max(\Gamma_t,\Gamma^*_t) \geq m\right\}.$$
	for any $m \geq 1$. We construct an RWCE where we freeze the evolution of the environment at time $\gamma_m-1$. The rigorous construction goes as follows. Given $\mathscr{F}_{\infty} = \sigma(\l X_s, G_s \r : s \geq 0)$, for each $t \geq 1$, set $\left\{Y_s^{(t)} : s \geq 0\right\}$ to be a weighted random walk on $(G, C_{t-1})$ with $Y_0^{(t)}=X_t$. For all $m \geq 1$, we then define,
		\begin{equation}{\label{defstop}}
		\widetilde{X}^{(m)}_t := \begin{cases} X_t, & \text{ if } t \leq \gamma_m, \\ Y_{t-\gamma_m}^{(\gamma_m)}, & \text{ if } t > \gamma_m,\end{cases}
	\end{equation}
	whereas $\widetilde{G}^{(m)}_t = (V,E, \widetilde{C}^{(m)}_t)$ with $\widetilde{C}^{(m)}_t = C_{(\gamma_m-1) \wedge t}$. We claim that $\left\{\l \widetilde{X}_t^{(m)}, \widetilde{G}_t^{(m)} \r\right\}_{t \geq 0}$ is indeed an RWCE. The proof of this claim is deferred to Section~\ref{appn}. Since the original walk is proper and bounded (from above and below), the new walks also satisfy those properties trivially and hence are necessarily elliptic. If $\gamma_m=1$, then the walk $\left\{\widetilde{X}_t^{(m)} : t \geq 0\right\}$, conditioned on $\mathcal{F}_{\infty}$, is just a weighted random walk on $(G,C_0)$ and hence transient. If  $\gamma_m >1$, then 
	\begin{align*}
	\sup_{t \geq 1}	 \widetilde{\Gamma}^{*(m)}_t&:= \sup_{t \geq 1} \dfrac{|\partial V_{(1)}| \cdot \sup_{z \in V_{(D_{\text{max}})}} \operatorname{deg}(z) \cdot\sup_{e \in E_{(1)}} \widetilde{C}^{(m)}_t(e) \cdot \sup_{e \in E_{(D_{\text{max}})}} \widetilde{C}^{(m)}_t(e)}{\inf_{e \in E_{(D_{\text{max}})}} \widetilde{C}^{(m)}_t(e)} = \sup_{1 \leq t \leq \gamma_m-1} \Gamma^*_t \leq m,
	\end{align*}
	whereas
	$$ \sum_{t \geq 0} \sum_{e} \big \rvert \widetilde{R}_t^{(m)}(e)-\widetilde{R}_{t+1}^{(m)}(e) \big \rvert = \sum_{t=0}^{\gamma_m-2} \sum_{e} \big \rvert R_t(e) - R_{t+1}(e) \big \rvert =\Gamma_{\gamma_m-1}\leq m.$$ 
	Moreover, if $\widetilde{\Lambda}^{(m)}_{N,t}$ is defined for $\left\{\widetilde{C}^{(m)}_s : s \geq 0\right\}$ in a fashion identical to how $\Lambda_{N,t}$ was defined for $\left\{C_s : s \geq 0\right\}$, we have 
	$$ \inf_{t \geq 1} \widetilde{\Lambda}^{(m)}_{D_{\text{max}},t} = \inf_{1 \leq t \leq \gamma_m-1} \widetilde{\Lambda}^{(m)}_{D_{\text{max}},t} = \inf_{1 \leq t \leq \gamma_m-1} \Lambda_{D_{\text{max}},t} \geq 1/m. $$
	Finally, since $\widetilde{C}_0^{(m)} = C_0$ and the weighted random walk on $(G,C_0)$ is assumed to be transient, we conclude by the first part of this proof that the RWCE $\left\{\l \widetilde{X}_t^{(m)}, \widetilde{G}_t^{(m)} \r\right\}_{t \geq 0}$ is transient. In other words, for any $m\geq 1$ we have $\P \left[\widetilde{X}_t^{(m)} =a \text{ for finitely many }t \geq 0\right]=1$ and thus
	$$\P\left[\widetilde{X}_t^{(m)} =a \text{ for finitely many }t \geq 0, \text{ for all } m \geq 1\right]=1.$$ To conclude, note that $\sup_{t \geq 0}\Gamma_t=\Gamma < \infty, \;\sup_{t \geq 0} \Gamma_t^* \leq \Gamma^* < \infty$ and $\inf_{t \geq 1} \Lambda_{D_{\text{max}},t} >0$ almost surely, which guarantees that $\gamma_{m^*} =\infty$ for some $m^*$ and hence $\widetilde{X}_t^{(m^*)}=X_t$ for all $t \geq 0$. This shows that $\P[X_t=a \text{ for finitely many }t \geq 0]=1$, which completes the proof.
	
		\end{proof}

	\section{Acknowledgment}
	A major portion of this work is taken from the Honors thesis of the first author submitted to the Department of mathematics at Stanford University. The authors would like to thank Amir Dembo for introducing this interesting topic to them and his useful comments throughout the progress of this project.

	\section{Appendix}{\label{appn}}
	
		\begin{proof}[Proof of Lemma~\ref{nonadaptive}] We prove the lemma by induction on $t$. The statement is trivially true for $t=0$ since $X_0=X_0^{\prime}$. Suppose that it is true for $t \leq T-1$ for some $T \geq 1$. Fix $x_0, \ldots, x_T \in V$ such that $x_{i-1} \sim x_i$ for all $i=1,\ldots,T$. Also fix measurable $\A_0, \ldots, \A_T \subseteq \mathcal{W}$. It is easy to see that 
		\begin{align*}
			\mathbb{P} \left[ X^{\prime}_s=x_s, C_s \in \A_s, \; \forall \; 0 \leq s \leq T \right] &= \mathbb{E} \left[ 	\mathbb{P} \left( X^{\prime}_s=x_s, C_s \in \A_s, \; \forall \; 0 \leq s \leq T \mid \sigma\left(\mathscr{G}_{\infty},X_{0}^{\prime}\right)\right) \right] \\
			&= \mathbb{E} \left[  \prod_{s=0}^{T-1} P(x_s,x_{s+1};C_s) \mathbbm{1}_{(X_0^{\prime}=x_0)}\prod_{s=0}^T \mathbbm{1}_{(C_s \in \A_s)} \right], \\
			&= \mathbb{E} \left[  \prod_{s=0}^{T-1} P(x_s,x_{s+1};C_s) \mathbbm{1}_{(X_0=x_0)}\prod_{s=0}^T \mathbbm{1}_{(C_s \in \A_s)} \right].
		\end{align*} 
		where the last equality follows from the fact that $X_0=X_0^{\prime}$. 
		On the otherhand, 
		\begin{align*}
			\mathbb{P} \left[ X_s=x_s, C_s \in \A_s, \; \forall \; 0 \leq s \leq T \right] 
			& = \mathbb{E} \left[ 	\mathbb{P} \left( X_s=x_s, C_s \in \A_s, \; \forall \; 0 \leq s \leq T\mid \sigma\left(\mathscr{F}_{T-1},X_{T}\right)\right) \right] \\
			& = \mathbb{E} \left[ \mathbb{P}\left(C_T \in \A_T \mid \sigma\left(\mathscr{F}_{T-1},X_{T}\right)\right) \prod_{s=0}^T \mathbbm{1}_{(X_{s}=x_s)} \prod_{s=0}^{T-1} \mathbbm{1}_{(C_s \in \A_s)}\right] \\
			& = \mathbb{E} \left[ \mathbb{P}\left(C_T \in \A_T \mid \mathscr{G}_{T-1}\right) \prod_{s=0}^T \mathbbm{1}_{(X_{s}=x_s)} \prod_{s=0}^{T-1} \mathbbm{1}_{(C_s \in \A_s)}\right] \\
			& = \mathbb{E} \left[ \mathbb{E} \left(\mathbb{P}\left(C_T \in \A_T \mid \mathscr{G}_{T-1}\right) \prod_{s=0}^{T} \mathbbm{1}_{(X_{s}=x_s)} \prod_{s=0}^{T-1} \mathbbm{1}_{(C_s \in \A_s)} \Bigg \rvert \mathscr{F}_{T-1} \right)\right] \\
			&= \mathbb{E} \left[ P(x_{T-1},x_T;C_{T-1})\mathbb{P}\left(C_T \in \A_T \mid \mathscr{G}_{T-1}\right) \prod_{s=0}^{T-1} \mathbbm{1}_{(X_{s}=x_s, C_s \in \A_s)} \right],
		\end{align*} 
		where the third equality follows from the assumption of non-adaptiveness. Now we apply the induction hypothesis for $t=T-1$. Observing that $\mathbb{P}\left(C_T \in \A_T \mid \mathscr{G}_{T-1}\right)$ is a measurable function of $(C_0,\ldots,C_{T-1})$, we can write 
		\begin{align*}
			\mathbb{P} \left[ X_s=x_s, C_s \in \A_s, \; \forall \; s \leq T \right] 
			& = \mathbb{E} \left[ P(x_{T-1},x_T;C_{T-1}) \mathbb{P}\left(C_T \in \A_T \mid \mathscr{G}_{T-1}\right) \prod_{s=0}^{T-1} \mathbbm{1}_{(X^{\prime}_{s}=x_s, C_s \in \A_s)} \right] \\
			&=  \mathbb{E} \left[ \prod_{s=0}^{T-1} P(x_s,x_{s+1};C_s) \mathbb{P}\left(C_T \in \A_T \mid \mathscr{G}_{T-1}\right) \mathbbm{1}_{(X_0^{\prime}=x_0)}\prod_{s=0}^{T-1} \mathbbm{1}_{(C_s \in \A_s)} \right] \\
			&=  \mathbb{E} \left[ \prod_{s=0}^{T-1} P(x_s,x_{s+1};C_s) \mathbb{E}\left(\mathbbm{1}_{(C_T \in \A_T)} \mid \mathscr{F}_{T-1}\right) \mathbbm{1}_{(X_0=x_0)}\prod_{s=0}^{T-1} \mathbbm{1}_{(C_s \in \A_s)} \right] \\
			&= \mathbb{E} \left[ \prod_{s=0}^{T-1} P(x_s,x_{s+1};C_s) \mathbbm{1}_{(X_0=x_0)}\prod_{s=0}^{T} \mathbbm{1}_{(C_s \in \A_s)} \right],
		\end{align*}
		where the second equality follows from the first one after taking conditional expectation with respect to $\sigma(\mathscr{G}_{\infty},X_0^{\prime})$ and the third equality follows again form the non-adaptiveness condition. The final equality concludes the proof by induction. 
	\end{proof}

	\begin{lemma}{\label{flowdirec}}
		Let $G=(V,E)$ be a connected finite graph equipped with a non-zero finite conductance configuration $C$. Fix $u\in V$ and non-empty $S, A \subseteq V$ such that $\left\{u \right\}, S, A $ are all mutually disjoint. Let $v(x)$ be the voltage at vertex $x$ if we ground $A$ and put $u$ at voltage $1$. For any $y \in V$, if every path from vertex $y$ to vertex $u$ in $G$ has to go through $S$, then $ \max_{x \in S} v(x) \geq v(y)$.
	\end{lemma}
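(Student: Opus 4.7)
The plan is to use the probabilistic interpretation of voltage, which identifies $v(y)$ with the probability that the weighted random walk on $(G,C)$ started from $y$ reaches $u$ before the set $A$. This interpretation is exactly the one recalled in Result~\ref{res1} (with $u$ playing the role of $a$ and $A$ playing the role of $\partial V_{(n)}$), and it holds on any finite connected weighted graph. So let $\{X_t\}_{t \geq 0}$ be the weighted random walk on $(G,C)$ and let $\tau_u, \tau_S, \tau_A$ denote the hitting times of $u$, $S$, and $A$ respectively.

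First I would dispose of the trivial cases. If $y \in A$ then $v(y)=0$, and the conclusion is immediate since $v \geq 0$. If $y \in S$, then $\max_{x \in S} v(x) \geq v(y)$ by definition. If $y = u$, the empty path from $y$ to itself does not pass through $S$ (as $u \notin S$), so the hypothesis of the lemma fails and there is nothing to prove. This leaves the genuine case $y \notin \{u\} \cup S \cup A$.

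Next, I would exploit the path hypothesis together with the strong Markov property. The key observation is that on the event $\{\tau_u < \tau_A\}$ the walk produces a path from $y$ to $u$ that never visits $A$, and by hypothesis any such path must cross $S$ strictly before reaching $u$. Hence $\tau_S \leq \tau_u$ on this event, so
\begin{equation*}
\{\tau_u < \tau_A\} \subseteq \{\tau_S < \tau_A\}.
\end{equation*}
Applying the strong Markov property at the stopping time $\tau_S$ (valid on $\{\tau_S < \infty\}$, which holds $\mathbb{P}_y$-almost surely on $\{\tau_S < \tau_A\}$), we conclude
\begin{equation*}
v(y) = \mathbb{P}_y[\tau_u < \tau_A] = \mathbb{E}_y\bigl[\mathbbm{1}(\tau_S < \tau_A)\, v(X_{\tau_S})\bigr].
\end{equation*}
Since $X_{\tau_S} \in S$ on $\{\tau_S < \tau_A\}$ and $S$ is a finite set (as $V$ is finite), bounding $v(X_{\tau_S}) \leq \max_{x \in S} v(x)$ and using $\mathbb{P}_y[\tau_S < \tau_A] \leq 1$ yields the desired inequality $v(y) \leq \max_{x \in S} v(x)$.

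I do not anticipate a serious obstacle here: the argument is essentially the standard strong Markov/maximum-principle reasoning, and the only subtlety is verifying that the path hypothesis really gives $\tau_S \leq \tau_u$ on the relevant event. An alternative, purely analytic route would be to invoke the maximum principle for the harmonic function $v$ on the connected component of $V \setminus (\{u\} \cup A \cup S)$ containing $y$, whose boundary lies in $\{u\} \cup A \cup S$; the path hypothesis forces this component to be disjoint from the component of $u$, so the boundary is actually contained in $A \cup S$, and then $v(y)$ is bounded by the maximum of $v$ over $A \cup S$, which equals $\max_{x \in S} v(x)$ since $v \equiv 0$ on $A$. Either route yields the lemma.
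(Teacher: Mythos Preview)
Your proof is correct and follows essentially the same approach as the paper: both use the probabilistic interpretation of voltage together with the strong Markov property at the hitting time of $S$, exploiting the path hypothesis to guarantee the walk must enter $S$ before reaching $u$. The paper conditions on $\{\theta_S \le \theta\}$ with $\theta = \tau_u \wedge \tau_A$ rather than your $\{\tau_S < \tau_A\}$, but since $S$, $A$, $\{u\}$ are disjoint these events coincide and the arguments are otherwise identical.
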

	
\begin{proof}
		We use the fact that for any $x \in V$, $v(x)$ is the probability that the weighted random walk on $(G,C)$, started from $x$, shall reach $u$ before hitting $A$. Let $\left\{Y_m : t \geq 0 \right\}$ be the weighted random walk with $Y_0=y,$ and $\mathscr{H}_t:= \sigma(Y_s : s \leq t)$.  Let $\theta$ be the first time the walk reaches $u$ or $A$, and $\theta_S$ be the first time it hits $S$.  Note that, $\theta$ and $\theta_S$ are both finite almost surely since the graph is finite and the conductances are all positive real numbers. Since all paths from $y$ to $u$ passes through $S$, we have the following.
		\begin{align*}
			v(y) = \mathbb{P} [Y_{\theta}=u] = \mathbb{P} [Y_{\theta}=u, \theta_S \leq \theta] 
			&= \mathbb{E} \left[ \mathbbm{1}_{(\theta_S \leq \theta)} \mathbb{P}(Y_{\theta}=u|\mathscr{H}_{\theta \wedge \theta_S}) \right] \\
			&= \mathbb{E} \left[ \mathbbm{1}_{(\theta_S \leq \theta)} \mathbb{P}(Y_{\theta}=u|\mathscr{H}_{\theta_S}) \right] \\
			&= \mathbb{E} \left[ \mathbbm{1}_{(\theta_S \leq \theta)} \sum_{x \in S} \mathbb{P}(Y_{\theta}=u|\mathscr{H}_{\theta_S})\mathbbm{1}_{(Y_{\theta_S}=x)}  \right] \\
			&= \mathbb{E} \left[ \mathbbm{1}_{(\theta_S \leq \theta)} \sum_{x \in S} v(x)\mathbbm{1}_{(Y_{\theta_S}=x)} \right] \\
			&\leq \left( \max_{x \in S} v(x) \right) \mathbb{P}[\theta_S \leq \theta] \leq \max_{x \in S} v(x).
		\end{align*}
		This completes the proof.
	\end{proof}
	
	\begin{proof}[Addendum to the proof of Theorem~\ref{mainthm}: transient case]
		Fix $t \geq 0$, $x_0, \ldots, x_t \in V$ and measurable $\A_0, \ldots, \A_t \subseteq \W$. Then, for any $y \sim x_t$, we have
		\begin{align*}
			\E \left[  \mathbbm{1}_{(\widetilde{X}^{(m)}_{t+1}=y)}\prod_{s=0}^{t} \mathbbm{1}_{(\widetilde{X}^{(m)}_s=x_s,\widetilde{C}^{(m)}_s \in \A_s)} \mathbbm{1}_{(\gamma_m > t)} \right]	& = 	 \E \left[  \mathbbm{1}_{(X_{t+1}=y)}\prod_{s=0}^{t} \mathbbm{1}_{(X_s=x_s,C_s \in \A_s)} \mathbbm{1}_{(\gamma_m > t)} \right] \\
			&=\E \left[  P(x_t,y; C_t) \prod_{s=0}^t \mathbbm{1}_{(X_s=x_s,C_s \in \A_s)} \mathbbm{1}_{(\gamma_m > t)} \right] \\
			&= \E \left[  P\left(\widetilde{X}_t^{(m)},y; \widetilde{C}_t^{(m)}\right)\prod_{s=0}^{t} \mathbbm{1}_{(\widetilde{X}^{(m)}_s=x_s,\widetilde{C}^{(m)}_s \in \A_s)} \mathbbm{1}_{(\gamma_m > t)} \right].
		\end{align*}
		Similarly, we see that
		\begin{align*}
			\E \left[  \mathbbm{1}_{(\widetilde{X}^{(m)}_{t+1}=y)}\prod_{s=0}^{t} \mathbbm{1}_{(\widetilde{X}^{(m)}_s=x_s,\widetilde{C}^{(m)}_s \in \A_s)} \mathbbm{1}_{(\gamma_m \leq t)} \right]	& = 	\sum_{u=1}^t \E \left[  \mathbbm{1}_{(\widetilde{X}^{(m)}_{t+1}=y)}\prod_{s=0}^{t} \mathbbm{1}_{(\widetilde{X}^{(m)}_s=x_s,\widetilde{C}^{(m)}_s \in \A_s)} \mathbbm{1}_{(\gamma_m =u )} \right]	\\
			&=	\E \sum_{u=1}^t \mathbbm{1}_{(Y_{t+1-u}^{(u)}=y)} \left(\prod_{s=0}^{u-1}  \mathbbm{1}_{(X_s=x_s,C_s \in \A_s)} \right)\\
			& \hspace{1.5 in} \left(\prod_{s=u}^{t} \mathbbm{1}_{(Y_{s-u}^{(u)}=x_s,C_{u-1} \in \A_s)} \right) \mathbbm{1}_{(\gamma_m=u)} \\
			&=	\E \sum_{u=1}^t P(x_t,y; C_{u-1}) \left(\prod_{s=0}^{u-1}  \mathbbm{1}_{(X_s=x_s,C_s \in \A_s)} \right)\\
			& \hspace{1.5 in} \left(\prod_{s=u}^{t} \mathbbm{1}_{(Y_{s-u}^{(u)}=x_s,C_{u-1} \in \A_s)} \right) \mathbbm{1}_{(\gamma_m=u)} \\
			&= \E \left[  P\left(\widetilde{X}_t^{(m)},y; \widetilde{C}_t^{(m)}\right)\prod_{s=0}^{t} \mathbbm{1}_{(\widetilde{X}^{(m)}_s=x_s,\widetilde{C}^{(m)}_s \in \A_s)} \mathbbm{1}_{(\gamma_m \leq t)} \right]
		\end{align*}
		where the third equality is obtained by taking the conditional expectation on $\sigma(\mathscr{F}_{\infty}; Y_s^{(u)} : s \leq t-u)$. This shows that 
		$$ \P\left[\widetilde{X}_{t+1}^{(m)} =y \Big \rvert \sigma\left( \l \widetilde{X}_s^{(m)}, \widetilde{G}_s^{(m)} \r : s \leq t \right) \right]= P( \widetilde{X}_t^{(m)},y; \widetilde{C}_t^{(m)})$$
	and thus $\left\{\l \widetilde{X}_t^{(m)}, \widetilde{G}_t^{(m)} \r\right\}_{t \geq 0}$ is an RWCE.
		
	\end{proof}

\end{document}